\numberwithin{equation}{section}
\newcommand{\mynewtheorem}[2]{%
  \newtheorem{#1}{#2}[section]%
  \expandafter\DeclareRobustCommand\expandafter{\csname#1autorefname\endcsname}{#2}%
  \expandafter\let
    \csname c@#1\expandafter\endcsname
    \expandafter=%
    \csname c@mytheorem\endcsname}
  {NOT TO USE}[section]
\theoremstyle{plain}
\theoremstyle{definition}
\newcommand*{\probability}[1]{\operatorname{\mathbb{P}}\left[#1\right]}
\newcommand*{\probabilityGiven}[2]{\operatorname{\mathbb{P}}\left[#1%
  \middle|#2\right]}
\newcommand*{\expectedValue}[1]{\operatorname{\mathbb{E}}\left[#1\right]}
\newcommand*{\expectedValueSub}[2]{\operatorname{\mathbb{E}}_{#1}\left[#2\right]}
\newcommand*{\size}[1]{\left|#1\right|}
\newcommand*{\genBy}[1]{\left\langle#1\right\rangle}
\newcommand*{\genPureBy}[1]{{\genBy{#1}}\sb{*}}
\newcommand*{\genPBy}[2]{{\genBy{#2}}\sb{#1*}}
\newcommand*{\moduleGenBy}[2]{\genBy{#2}\sb{#1}}
\newcommand*{\moduleGenPureBy}[2]{\genBy{#2}\sb{#1,*}}
\newcommand*{\integerPart}[1]{
  \left\lceil
  #1%
  \right\rceil}
\DeclareMathOperator{\End}{End}
\DeclareMathOperator{\Hom}{Hom}
\DeclareMathOperator{\GL}{GL}
\DeclareMathOperator{\length}{length}
\newcommand{\Z}{\mathbb{Z}}
\newcommand{\R}{\mathbb{R}}
\newcommand{\N}{\mathbb{N}}
\begin{document}
\title{Rigid abelian groups and the probabilistic method}
\keywords{random construction, abelian groups with prescribed
  endomorphisms, probabilistic method}
\subjclass[2010]{Primary: 20K20, 20K15, 20K30, 05D40; Secondary: 60B15}

\author{Gábor Braun}
\address{Universität Duisburg–Essen, Campus Essen\\
  Fachbereich Mathematik, AG Göbel–Strüngmann
  Universitätsstraße 2\\
  45117 Essen}
\email{gabor.braun@uni-duisburg-essen.de}

\author{Sebastian Pokutta}
\address{Friedrich-Alexander-University of Erlangen-Nürnberg\\
Am Weichselgarten 9\\
91058 Erlangen\\
Germany}
 \email{sebastian.pokutta@math.uni-erlangen.de}

\date{\svnToday/Draft/Revision: \svnInfoRevision}

\dedicatory{Dedicated to Rüdiger Göbel on the occasion of his 70th birthday.}

\begin{abstract}
  The construction of torsion-free abelian groups with prescribed
  endomorphism rings starting with Corner's seminal work (see \citet{corner}) is a
  well-studied subject in the theory of 
  abelian groups. Usually these construction work by adding elements
  from a (topological) completion in order to get rid of (kill) unwanted
  homomorphisms. The critical part
  is to actually prove that every unwanted homomorphism can be killed
  by adding a suitable element.
  We will demonstrate
  that some of those constructions can be significantly simplified by
  choosing the elements at
  random. As a result, the endomorphism ring will be almost surely
  prescribed, i.e., with probability one. 
\end{abstract}

\maketitle

\section{Introduction}
\label{sec:introduction}
The probabilistic method, pioneered by Erdős (see
\citet{erdos1959,erdos1961}) is one of the most powerful
tools in combinatorics, theoretical computer science, and other
branches of mathematics to show the existence of mathematical objects with prescribed
properties. It is a non-constructive method which infers
the existence of a mathematical object by showing that \emph{the
  probability of its existence} is non-zero. Since its
early days it has lead to a wide range of striking and unexpected
results (cf., e.g.,
\citet{erdos1959random,shelah1988zero,shelah1994random}); for an extensive
overview as well as a very nice introduction the interested reader is
referred to \citet{alon2000probabilistic}. 
We will use the
probabilistic method in order to show the existence of abelian groups
with prescribed endomorphism rings. By doing so, we obtain the
probabilistic counterparts of well-known constructions. While the
statements of the probabilistic counterparts are more general in some
sense, as they assert that almost any choice of, say, elements from
the completion suffice, the proofs simplify.  Another application of the
probabilistic method in abelian group theory,
constructing groups with prescribed Ulm sequences,
was presented in \citet{drogo2010}.

The structure of the paper is as follows. We start with a brief
introduction to the probabilistic method and recall a few concepts
from probability theory in Section~\ref{sec:brief-intr-prob}. We will
then apply the method to construct infinite abelian groups
with prescribed endomorphism rings. For each
construction, we will first recall the
 deterministic construction and provide a sketch of its
proof, then we provide the necessary probabilistic tools and specify
the distributions from which the elements or substructures are drawn,
and finally we present the proof of the probabilistic variant of the
construction. In the first part, in
Section~\ref{sec:groups-via-p}, we consider the classical Corner
construction (see \citet{corner} or \citet{Corner_rigid}). We first show that a uniform, random choice
of countably many \(p\)-adic integers forms an algebraically
independent set with probability one (Lemma~\ref{lem:3}) and later we generalize this
construction to \(2^{\aleph_0}\) elements (Lemma~\ref{lem:1}). We then
provide a probabilistic version of Corner's construction (Theorem~\ref{thm:rigidProb}). In this
case the actual distribution chosen for the random elements does
matter and we provide an example
where using a nearly uniform distribution results in a free group
(Theorem~\ref{th:random-free}). We  then proceed 
with the Zassenhaus construction (see \citet{zassenhaus1967orders}) in
Section~\ref{sec:small-rank-groups} showing that every ring with a
finite-rank free additive group can be realized as the endomorphism
ring of a torsion-free abelian group. While the proof of the
deterministic version (Theorem~\ref{thm:Zassenhaus}) is rather non-trivial and slightly technical,
the proof of the probabilistic version follows more naturally
(Theorem~\ref{thm:ZassenhausProb}) relying on an old result by
Frobenius and Chebotarëv (Lemma~\ref{lem:recSumPrimes}). \medskip

In the following,
let \(\widehat{B}\) denote the \(p\)-adic completion of \(B\).
Let \(J_{p} \coloneqq \widehat{\mathbb{Z}}\)
denote the ring of \(p\)-adic integers.
The \(p\)-adic completion is mainly of interest
when \(B\) is naturally a submodule of \(\widehat{B}\),
which happens exactly when \(B\) is \emph{\(p\)-reduced},
i.e., satisfying \(\bigcap_{n=0}^{\infty} p^{n} B = 0\). Further let
\(X_{p*}\) denote the \emph{\(p\)-purification} of
a submodule \(X\) of a \(p\)-torsion-free module
for some prime \(p\),
i.e., \(X_{p*}\) consists of all \(x/p^{k}\) from the ambient module
with \(x \in X\) and \(k \in \mathbb{N}\).
We omit the ambient module from the notation as it will
be clear from the context.  All other notation is
standard as to be found in \citet{eklof2002almost, jech1978set,
  goebel2006approximations}, and \citet{fuchs1970infinite,
  fuchs1973infinite}.
Recall that an event happens
\emph{almost surely} if the probability of the event is \(1\). For
convenience we define \([n] \coloneqq \{1, \dots, n\}\) for \(n \in \N\). 

\section{The probabilistic method: a brief introduction}
\label{sec:brief-intr-prob}
We will now present a brief introduction to the probabilistic method
and recall the necessary notions and concepts from probability
theory. For a more complete introduction we refer the interested
reader to \citet{alon2000probabilistic}. As mentioned above,
the probabilistic method establishes the existence of structures with
desired properties
by picking the structure randomly
and showing that it has
the desired properties with a positive
probability.
Before we continue with
an example to illustrate the method, we recall a few
notions and concepts from probability theory. 

Recall that probability theory works with a collection of events,
which form a so-called \(\sigma\)-algebra:
it consists of some subsets of a big set closed under countable union
and complements, and therefore also countable intersections.
There is a \emph{probability measure} \(\probability{.}\)
assigning to each event a number in \([0,1]\),
the \emph{probability} of the event.
The probability measure has to satisfy various properties,
from which we mention only
\(\probability{\bigcup_{i < \omega}  A_i} \leq
\sum_{i < \omega} \probability{A_i}\)
for any countable family of events \(A_1, A_2,\dots\).
A collection \(\{A_{i} : i \in J \}\) of events is
\emph{independent}, if 
\(\probability{\bigcap_{i \in I}  A_i} = \prod_{i \in I}
\probability{A_i}\) for any finite \(I \subseteq J\).
The following well-known lemma will be crucial:

\begin{lemma}[Borel-Cantelli Lemma]\label{lem:borelCant} Let \(A_1, A_2, \dotsc \subseteq
\mathcal F\) be a sequence of events. Further let
\(\limsup_{i \rightarrow \infty} A_i\) denote the set of outcomes that
  occur infinitely often. The following hold:
\begin{enumerate}
\item If \(\sum_{i < \omega} \probability{A_i} < \infty\) then
  \(\probability{\limsup_{i \rightarrow \infty} A_i} = 0\).
\item\label{item:1}
  If \(A_1, A_2, \dots\) are independent and \(\sum_{i < \omega}
  \probability{A_i} = \infty\), then
  \(\probability{\limsup_{i \rightarrow \infty} A_i} = 1\). 
  In other words, infinitely many events occur with probability \(1\).
\end{enumerate}
\end{lemma}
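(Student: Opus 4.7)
The plan is the standard two-part argument, starting from the identification
\(\limsup_{i\to\infty} A_i = \bigcap_{n\geq 1}\bigcup_{i\geq n} A_i\),
which I would state first since both parts rely on it.

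For part (1), the idea is monotonicity plus countable subadditivity. I would note that for every fixed \(n\), \(\limsup_{i\to\infty} A_i \subseteq \bigcup_{i\geq n} A_i\), so by the subadditivity inequality recalled right above the lemma,
\[
\probability{\limsup_{i\to\infty} A_i} \leq \probability{\bigcup_{i\geq n} A_i} \leq \sum_{i\geq n} \probability{A_i}.
\]
Since the full series \(\sum_{i<\omega}\probability{A_i}\) converges, the tail on the right tends to \(0\) as \(n\to\infty\), so the left side must be \(0\). This part is routine.

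For part (2), I would work with the complement. Set \(B_n \coloneqq \bigcap_{i\geq n} A_i^c\); then \((\limsup A_i)^c = \bigcup_n B_n\), and by subadditivity it suffices to show \(\probability{B_n}=0\) for every \(n\). By independence of the \(A_i\) (hence of the \(A_i^c\)) and continuity of probability from above applied to the decreasing intersections \(\bigcap_{i=n}^{N} A_i^c \searrow B_n\), I get
\[
\probability{B_n} = \lim_{N\to\infty} \prod_{i=n}^{N}\bigl(1-\probability{A_i}\bigr).
\]
Now I would use the elementary inequality \(1-x \leq e^{-x}\) valid for \(x\in[0,1]\) to bound the finite product by \(\exp\bigl(-\sum_{i=n}^{N}\probability{A_i}\bigr)\). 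Since \(\sum_{i<\omega}\probability{A_i}=\infty\), the exponent tends to \(-\infty\) as \(N\to\infty\), forcing \(\probability{B_n}=0\). Summing over \(n\) via subadditivity then gives \(\probability{(\limsup A_i)^c}=0\), i.e., \(\probability{\limsup A_i}=1\).

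The only mildly delicate point is the passage from finite intersections to the countable intersection \(B_n\); this uses continuity from above, which in turn uses that the \(B_n\) are limits of decreasing sequences of measurable sets of finite measure (bounded by \(1\)). Everything else is algebra, so no real obstacle is expected; the technical step worth stating explicitly is the \(1-x\leq e^{-x}\) bound, which converts the product form arising from independence into a form directly controlled by the divergent series hypothesis.
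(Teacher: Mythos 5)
The paper states the Borel--Cantelli Lemma as a well-known fact from probability theory and does not prove it, so there is no proof in the paper to compare against. Your argument is the standard textbook proof and it is correct: part (1) via tails of a convergent series and countable subadditivity, part (2) via the complement, independence with continuity from above, and the bound \(1 - x \leq e^{-x}\); the delicate point you flag (passing to the countable intersection) is indeed the right one to be explicit about, and your treatment of it is sound.
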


A \emph{distribution} of a random variable is
the minimal \(\sigma\)-algebra of events
meaningful for the variable
together with the probability measure on it.
For a discrete random variable \(X\),
i.e., one taking only countably many values,
its
\emph{expected value} is
\(\expectedValue{X} = \sum_{i} X_i \probability{X = X_i} \),
where the \(X_i \in \R\) form the range of \(X\).
Occasionally, we will use expected values of more general variables,
but for intuition, it is
mostly sufficient to think of the expected value in its discrete form. We
will later use Fubini's theorem which allows for iterated computation
of expected values: 

\begin{lemma}[Fubini's Theorem]
  \label{lem:fubini} Let \(f\) be a non-negative function which is measurable (in the respective
  space) and let \(X,Y\) be independent random variables and
\[\expectedValueSub{X,Y}{f(X,Y)} < \omega.\]
Then
\[\expectedValueSub{X,Y}{f(X,Y)} =
\expectedValueSub{X}{\expectedValueSub{Y}{f(X,Y)}}.\]
\end{lemma}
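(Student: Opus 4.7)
The plan is to reduce the statement to the classical Tonelli theorem for product measures, the one new ingredient being the use of independence to identify the joint law of $(X,Y)$ with the product $\probability{X \in \cdot} \otimes \probability{Y \in \cdot}$ of marginals. Because $f$ is non-negative, the finiteness hypothesis $\expectedValueSub{X,Y}{f(X,Y)} < \omega$ is only cosmetic; it merely ensures the common value of the two iterated expectations is finite and that the inner expectation $y \mapsto \expectedValueSub{X}{f(X,y)}$ is finite almost surely.

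I would carry out the standard measure-theoretic induction (a monotone class or $\pi$--$\lambda$ argument). First, verify the identity for indicator functions of measurable rectangles, $f = \mathbf{1}_{A \times B}$: by independence,
\[
\expectedValueSub{X,Y}{\mathbf{1}_{A\times B}(X,Y)} = \probability{X \in A}\,\probability{Y \in B} = \expectedValueSub{X}{\mathbf{1}_{A}(X)\,\expectedValueSub{Y}{\mathbf{1}_{B}(Y)}},
\]
which is the desired equality in this special case. Then extend by linearity to finite disjoint unions of rectangles (an algebra generating the product $\sigma$-algebra), by a monotone class argument to indicators of arbitrary measurable sets in the product, and by linearity to non-negative simple functions. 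Finally, approximate a general non-negative measurable $f$ from below by an increasing sequence of simple functions $f_n \uparrow f$ and apply the monotone convergence theorem twice, once for the inner and once for the outer expectation, to pass to the limit.

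The one genuinely technical point, and what I would flag as the main obstacle, is showing that the partial integral $y \mapsto \expectedValueSub{X}{f(X,y)}$ is itself a measurable function of $y$, so that the outer expectation is even defined. This is established by the very same induction: measurability is immediate for indicators of rectangles, it is preserved under finite non-negative linear combinations and under monotone limits, and hence propagates through the approximation scheme to all non-negative measurable $f$. Once this is in place the remainder of the argument is entirely routine and classical, so in the exposition I would simply cite it as Tonelli's theorem for the product measure supplied by independence.
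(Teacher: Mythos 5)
The paper does not actually prove this lemma; it is stated as a piece of background from measure theory (a form of the Fubini--Tonelli theorem for independent random variables) and used as a black box later on, in the proof of Lemma~\ref{lem:3}. Your proposal is a correct and standard derivation: the crucial observation that independence identifies the joint law of $(X,Y)$ with the product of the marginals is exactly what reduces the claim to the classical Tonelli theorem for product measures, and the monotone-class/simple-function induction you sketch, including the separate remark that measurability of the partial integral $y \mapsto \expectedValueSub{X}{f(X,y)}$ must itself be established by the same induction, is the textbook route. So there is no conflict with the paper, only a difference in level of detail; the authors take the result for granted, and you have supplied the missing proof. One minor stylistic point: since $f$ is non-negative, the result holds with both sides possibly equal to $+\infty$ (Tonelli rather than Fubini), so the finiteness hypothesis is, as you note, not needed for the identity itself; the paper presumably includes it because in its applications the relevant expectations are probabilities and hence finite.
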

Here expected values are taken in
the total distribution of the variables in the subscript,
and the expected value is a function of the other random variables.

We will now illustrate the probabilistic method by
computing the order of \(\GL(n,q)\),
the group of invertible \(n \times n\) matrices
over the field with \(q\) elements.
This is merely a reformulation of a counting argument
in the framework of probability theory,
just as many early examples.

\begin{proposition}
Let \(n \in \N\) be a natural number
and \(\mathbb F_q\) be the finite field
with \(q\) elements. Then the number of \(n \times n\) invertible
matrices over \(\mathbb F_q\) is 
\[\size{\GL(n,q)} = \prod_{k \in [n]} ( q^{n} - q^{k-1} ).\]
\begin{proof}
Let \(A\)  be a random matrix over \(\mathbb F_q\) chosen with uniform distribution.
Clearly, \(A\) is invertible if and only if its columns \(a_{1}, \dots, a_{n}\) 
are linearly independent. Observe that the probability of \(A\) being
invertible can be rephrased by breaking it up into
probabilities of linear independence of smaller subsets:
\begin{multline}
  \label{eq:33}
  \probability{A \text{ invertible}} =
  \prod_{k \in [n]}
  \probabilityGiven{a_{1}, \dots, a_{k} \text{ independent}}%
  {a_{1}, \dots, a_{k-1} \text{ independent}}
  \\
  =
  \prod_{k \in [n]}
  \left(
    1 - \probabilityGiven{a_{k} \in \genBy{a_{1}, \dots, a_{k-1}}}{a_{1},
    \dots, a_{k-1} \text{ independent}}
  \right)
\end{multline}
Provided that \(a_{1}, \dots, a_{k-1}\) are linearly independent,
they span a (\(k-1\))-dimensional subspace, so the probability that
\(a_{k}\) is in this subspace is
\begin{equation}
  \label{eq:32}
  \probabilityGiven{a_{k} \in \genBy{a_{1}, \dots, a_{k-1}}}{a_{1},
    \dots, a_{k-1} \text{ independent}} = \frac{q^{k-1}}{q^{n}},
\end{equation}
as the columns are independent random variables. We therefore obtain
\begin{equation}
  \label{eq:34}
  \probability{A \text{ invertible}} =
  \prod_{k \in [n]}
  \left(
    1 - \frac{q^{k-1}}{q^{n}}
  \right).
\end{equation}
On the other hand we have \(\probability{A \text{ invertible}} =
\frac{\ell}{q^{n^2}}\), where \(\ell\) is the number of invertible
matrices and \(q^{n^2}\) is the total number of \(n \times n\)
matrices over \(\mathbb F_q\). We therefore obtain 
\[\ell =   q^{n^2} \cdot \prod_{k \in [n]}
  \left( 1 - \frac{q^{k-1}}{q^{n}} \right)  = \prod_{k \in [n]}
   q^{n} \left( 1 - \frac{q^{k-1}}{q^{n}} \right) = \prod_{k \in [n]} ( q^{n} - q^{k-1} ).\]
\end{proof}
\end{proposition}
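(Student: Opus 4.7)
The plan is to translate the counting of invertible matrices into a probability calculation, following the pattern the section already sets up for the probabilistic method. I would choose $A$ uniformly at random from the $q^{n^{2}}$ matrices in $\mathbb{F}_{q}^{n \times n}$, so that its columns $a_{1}, \dots, a_{n}$ are independent, each uniformly distributed on $\mathbb{F}_{q}^{n}$. Since $A$ is invertible exactly when its columns are linearly independent, the quantity I want to compute is $\probability{A \text{ invertible}}$, from which $\size{\GL(n,q)} = q^{n^{2}} \cdot \probability{A \text{ invertible}}$.

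The main step is to decompose the event by the chain rule for conditional probabilities, expressing the independence of $a_{1}, \dots, a_{n}$ as the telescoping intersection of the nested events ``$a_{1}, \dots, a_{k}$ are independent'' for $k = 1, \dots, n$. This gives the first equality in \eqref{eq:33}; the second equality is just the observation that, conditional on $a_{1}, \dots, a_{k-1}$ being independent, the event ``$a_{1}, \dots, a_{k}$ are independent'' is the complement of ``$a_{k} \in \genBy{a_{1}, \dots, a_{k-1}}$''.

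Next I would evaluate each conditional probability. Conditioning on any specific independent tuple $a_{1}, \dots, a_{k-1}$, these vectors span a $(k-1)$-dimensional subspace of $\mathbb{F}_{q}^{n}$, which contains exactly $q^{k-1}$ vectors. Because $a_{k}$ is independent of the other columns and uniform on a set of size $q^{n}$, the conditional probability that $a_{k}$ lands in the spanned subspace is $q^{k-1}/q^{n}$, independent of which specific independent tuple was realized, which justifies \eqref{eq:32}. Multiplying the contributions as in \eqref{eq:34} and clearing the denominator $q^{n^{2}} = \prod_{k \in [n]} q^{n}$ produces the claimed product formula.

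There is no real obstacle here; the only subtle point is that the conditional probability in \eqref{eq:32} does not actually depend on \emph{which} independent tuple $a_{1}, \dots, a_{k-1}$ one conditions on, only on the dimension of the span. This uniformity is what makes the chain-rule expansion collapse into a single product rather than an average over realizations, and it is ultimately a consequence of the mutual independence of the columns together with the uniformity of the distribution on $\mathbb{F}_{q}^{n}$.
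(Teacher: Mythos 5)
Your proposal is correct and follows essentially the same route as the paper's proof: choose $A$ uniformly, telescope the probability of column independence via the chain rule, evaluate each conditional factor as $q^{k-1}/q^{n}$, and rescale by $q^{n^{2}}$. Your remark that the conditional probability in \eqref{eq:32} depends only on the dimension of the span, not on the realized tuple, is a helpful articulation of a point the paper leaves implicit.
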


In the following we operate under the same paradigm. However, it is
not the abelian groups \emph{per se} that are drawn from random
distributions. We will use the concept in a slightly different
fashion: we will pick crucial elements of the constructions, such as
elements from the completion, at random.
Obviously, we have to specify \emph{how} we actually pick these
elements, i.e., we have to provide the distribution.
The distributions that we will use are very natural and since
we are concerned about existence only, we can basically pick any
(well-defined) distribution that suits our needs. 

Another fact that is worthwhile to be mentioned is the structure of our
results. We do not just provide mere \emph{existence statements}, but we will
show that the endomorphism properties hold \emph{almost surely}, i.e.,
every random choice is satisfactory with probability \(1\). 
Actually, this is expected in view of Kolmogorov's zero-one law.

\section{Groups via \(p\)-adic numbers}
\label{sec:groups-via-p}
Our starting point is the following well-known construction of
Corner (see~\citet{corner} or~\citet[Theorem~110.1]{fuchs1973infinite}).
For simplicity, we restrict to \(p\)-reduced rings \(R\).

\begin{theorem}
  \label{thm:rigid}
  For every countable \(p\)-reduced torsion-free ring \(R\),
  there is a torsion-free left abelian group of countably infinite rank
  with endomorphism ring \(R\).
\begin{proof}[Sketch of proof] 
Let \(\xi_{n} \in J_{p}\) with \(n < \omega\) be quadratically
independent
\(p\)-adic integers and further let \(B\) be a free \(R\)-module of
countably infinite rank.
We define
\begin{equation}
  \label{eq:1}
  G \coloneqq
  \genPBy{p}{%
    B, R b \xi_{b} : b \in B \setminus \{0\}
  } \subseteq \widehat{B}.
\end{equation}
Then \(\End G = R\).
For details, see \citet{corner},
or for a slightly different construction
\citet[Theorem~110.1]{fuchs1973infinite}, or the
proof of Theorem~\ref{thm:rigidProb} below.
\end{proof}
\end{theorem}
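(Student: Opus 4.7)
The plan is to first observe that $R$ embeds into $\End G$ by left multiplication: the $R$-action on $B$ extends continuously to $\widehat{B}$, hence restricts to $G \subseteq \widehat B$, and the resulting map $R \hookrightarrow \End G$ is injective since $B$ is $R$-free. The real content is the reverse inclusion. Given $\varphi \in \End G$, I would extend it to a continuous endomorphism $\hat\varphi$ of $\widehat B$: this extension exists because $\varphi$ respects the filtration $\{p^{n} G\}$ and $\widehat G = \widehat B$, using that $B \subseteq G \subseteq \widehat B$ with $G$ being $p$-pure in $\widehat B$. The extension $\hat\varphi$ is automatically $J_{p}$-linear, which yields, for every $b \in B\setminus\{0\}$, the crucial identity $\hat\varphi(b\xi_{b}) = \xi_{b}\,\varphi(b)$; this element must lie in $G$ since $b\xi_{b} \in G$ by construction.

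The heart of the argument is to translate the containment $\xi_{b}\,\varphi(b) \in G$ into concrete information about $\varphi(b)$. By the definition of $G$ as a $p$-purification, there exist $k \in \N$, an element $x_{0} \in B$, scalars $r_{i} \in R$, and distinct $d_{i} \in B \setminus \{0\}$ such that
\[
  p^{k}\,\xi_{b}\,\varphi(b) \;=\; x_{0} \;+\; \sum_{i=1}^{n} r_{i}\, d_{i}\, \xi_{d_{i}}
\]
inside $\widehat B$. Expanding $\varphi(b)$ in an $R$-basis of $B$, this becomes a nontrivial $R$-linear relation in $\widehat B$ between $1$, the $\xi_{d_i}$'s, and products $\xi_{b}\,\xi_{d_j}$. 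The quadratic independence of the $\xi_{n}$'s is tailored exactly to rule out any such relation unless the ``quadratic'' coefficients vanish and $\varphi(b)$ is an $R$-multiple of $b$. Matching coefficients of the different monomials in the $\xi$'s then forces $\varphi(b) = r_{b}\, b$ for a unique $r_{b} \in R$.

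Finally, I would upgrade the local statement $\varphi(b) = r_{b} b$ to a global one. Pick two $R$-independent basis elements $b_{1}, b_{2}$ of $B$; applying the previous step to $b_{1}$, $b_{2}$, and $b_{1}+b_{2}$ and using the $R$-linearity of $\varphi$ gives
\[
  r_{b_{1}+b_{2}}(b_{1}+b_{2}) \;=\; r_{b_{1}} b_{1} + r_{b_{2}} b_{2},
\]
which, by $R$-freeness of $B$, forces $r_{b_{1}} = r_{b_{2}} = r_{b_{1}+b_{2}}$. Hence a single $r \in R$ represents $\varphi$ on an $R$-basis of $B$, and therefore on all of $B$ by $R$-linearity and on all of $G$ by $p$-adic continuity, so $\varphi$ is left multiplication by $r$.

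The main obstacle is the second step: extracting from the single containment $\xi_{b}\,\varphi(b) \in G$ the conclusion $\varphi(b) \in Rb$. This is precisely where quadratic independence of the $\xi_{n}$'s is doing the work, by ruling out ``parasitic'' solutions where $\varphi(b)$ would lie in $\widehat B$ but not in $Rb$. All the technical weight of Corner's construction sits here, and the exact notion of quadratic independence used (as will be needed for the random variant in Theorem~\ref{thm:rigidProb}) is shaped to make this monomial-matching argument go through cleanly.
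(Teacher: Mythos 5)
Your proposal is correct and follows the classical Corner argument — extend $\varphi$ by continuity to a $J_p$-linear map on $\widehat B$, obtain $\xi_b\varphi(b)\in G$, write both $\varphi(b)$ and $\xi_b\varphi(b)$ in terms of the generators of $G$, and match coefficients of the $\xi$-monomials using quadratic independence to force $\varphi(b)\in Rb$. This is exactly what the paper delegates to Corner and Fuchs, and it is also the same "continuity plus coefficient comparison" strategy carried out in the paper's proof of Theorem~\ref{thm:rigidProb}, the only substantive difference being that there the random elements $a_n^\alpha=\sum_{b\in I_n}b\,\xi_{n,b}^\alpha$ have several $\xi$'s per generator, which forces full algebraic (rather than merely quadratic) independence. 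Two small points: where you invoke "the $R$-linearity of $\varphi$," you of course only have additivity, since $\varphi$ is a priori just a group endomorphism; but the argument survives because the local step gives $\varphi(b)=r_b b$ for \emph{every} non-zero $b$, so taking $b$, $b'$, $b+b'$ with $b,b'$ chosen $R$-independent (which is always possible since $B$ has infinite rank) shows all the $r_b$ agree. Second, you should expand $\varphi(b)$ using its membership in $G$ (so $p^m\varphi(b)=y_0+\sum_j s_j e_j\xi_{e_j}$ with $y_0\in B$), not "in an $R$-basis of $B$," since $\varphi(b)$ need not lie in $B$ until after the comparison; once the quadratic terms vanish, $p$-purity of $B$ in $\widehat B$ then puts $\varphi(b)$ back in $B$.
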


Note that the construction in Theorem~\ref{thm:rigid} carries over
to uncountable modules up to size \(2^{\aleph_{0}}\). In order to establish the probabilistic version,
we will choose continuum many random \(p\)-adic integers,
which will be almost surely algebraically independent. First we present the easier, countable case:
countably many, randomly and independently chosen \(p\)-adic
integers are almost surely algebraically independent. 
\begin{lemma}
  \label{lem:3}
Let \(\mathcal M = \{\xi_n \mid n < \omega\} \subseteq J_p\) be a set of 
  countably many, randomly and independently chosen \(p\)-adic
  integers such that \(\probability{\xi_{n} = \lambda} = 0\)
  for every \(n < \omega\) and \(\lambda \in J_p\). Then \(\mathcal M\) is almost surely algebraically independent.
\begin{proof}
We show that every finite subset \(S \subseteq \mathcal M\) is
almost surely algebraically
independent. The proof is by induction on the cardinality \(n\) of
\(S\). For \(n = 0\) the statement holds trivially as \(S =
\emptyset\). Therefore let \(n \geq 1\) and let \(S = \{\xi_{1}, \dots,
\xi_{n}\}\) be a finite subset of \(\mathcal M\). Note that 
there are only countably many non-zero polynomials \(f\)
with integer coefficients in \(n\) variables. Thus it suffices to show
that \(f(\xi_{1}, \dots, \xi_{n}) \neq 0\)
almost surely for every such \(f\). By assumption
\(\xi_{n}\) is independent of \(\xi_{1}, \dots, \xi_{n-1}\). We can
therefore apply Lemma~\ref{lem:fubini} to compute the probability
\(\probability{f(\xi_{1}, \dots, \xi_{n}) \neq 0}\) by iterating
expected values:
\begin{equation}
  \label{eq:30}
  \probability{f(\xi_{1}, \dots, \xi_{n}) \neq 0} =
  \expectedValueSub{\xi_1, \dots, \xi_{n-1}}{%
    \probability{f(\lambda_{1}, \dots, \lambda_{n-1}, \xi_{n}) \neq 0 \mid
      \xi_{i} = \lambda_{i}, i \in [n-1]}%
  }. 
\end{equation}
By induction, we conclude that \(f(\xi_{1}, \dots, \xi_{n-1}, x_{n})\) is almost surely
a non-zero polynomial in \(x_n\). Therefore it has only finitely many
roots and together with the assumption \(\probability{\xi_{n} = \lambda} = 0\)
  for every \(n < \omega\) and \(\lambda \in J_p\), we infer that
  \(\xi_{n}\) is none of these roots almost surely. It follows that 
\[\expectedValueSub{\xi_1, \dots, \xi_{n-1}}{%
    \probability{f(\lambda_{1}, \dots, \lambda_{n-1}, \xi_{n}) \neq 0 \mid
      \xi_{i} = \lambda_{i}, i \in [n-1]}%
  } = 1\]
which completes the proof. 
\end{proof}
\end{lemma}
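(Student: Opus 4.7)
The plan is to reduce algebraic independence of the whole set to a statement about individual polynomials, and then induct on the number of variables. Since $\mathcal{M}$ is algebraically independent iff every finite subset is, and the set of finite subsets together with the set of nonzero polynomials in $\Z[x_{1}, \dotsc, x_{n}]$ (for all $n$) is countable, countable subadditivity of the probability measure reduces matters to the following claim: for every $n \geq 1$ and every nonzero $f \in \Z[x_{1}, \dotsc, x_{n}]$, and any $n$ distinct members $\xi_{1}, \dotsc, \xi_{n}$ of $\mathcal{M}$,
\[
\probability{f(\xi_{1}, \dotsc, \xi_{n}) = 0} = 0.
\]

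I would prove this claim by induction on $n$. The base case $n = 1$ is immediate: a nonzero polynomial over the integral domain $J_{p}$ has only finitely many roots, and the hypothesis $\probability{\xi_{1} = \lambda} = 0$ for every $\lambda \in J_{p}$ gives probability $0$ of hitting any of them. For the inductive step, I would expand
\[
f(x_{1}, \dotsc, x_{n}) = \sum_{i=0}^{d} g_{i}(x_{1}, \dotsc, x_{n-1})\, x_{n}^{i},
\]
where at least one $g_{i} \in \Z[x_{1}, \dotsc, x_{n-1}]$ is nonzero. By the induction hypothesis, $g_{i}(\xi_{1}, \dotsc, \xi_{n-1}) \neq 0$ almost surely for each such $i$, so almost surely the one-variable polynomial $f(\xi_{1}, \dotsc, \xi_{n-1}, x_{n}) \in J_{p}[x_{n}]$ is nonzero. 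Conditioning on the values of $\xi_{1}, \dotsc, \xi_{n-1}$ and using independence of $\xi_{n}$ from them together with Fubini (Lemma~\ref{lem:fubini}), the inner conditional probability that $\xi_{n}$ is a root of this nonzero polynomial is $0$, by the same finiteness-of-roots argument as in the base case. Integrating out yields the claim.

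The only mildly delicate point is the shift at the inductive step from polynomials with integer coefficients to polynomials whose coefficients lie in $J_{p}$ (the result of substituting the already-drawn $\xi_{i}$). This causes no trouble because $J_{p}$ is an integral domain of characteristic $0$, so a nonzero element of $J_{p}[x_{n}]$ still has only finitely many roots, which is all that the continuous-distribution hypothesis on $\xi_{n}$ needs. Everything else is routine: countable subadditivity, Fubini, and induction.
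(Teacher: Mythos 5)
Your proof is correct and follows essentially the same route as the paper's: reduce via countable subadditivity to showing $\probability{f(\xi_{1}, \dotsc, \xi_{n}) = 0} = 0$ for each nonzero integer polynomial $f$, induct on the number of variables, and at the inductive step condition on $\xi_{1}, \dotsc, \xi_{n-1}$ via Fubini so that $\xi_{n}$ need only avoid the finitely many roots of an almost-surely nonzero polynomial in $J_{p}[x_{n}]$. The only cosmetic differences are that you start the induction at $n = 1$ rather than the vacuous $n = 0$, and you explicitly flag the harmless passage from $\Z$-coefficients to $J_{p}$-coefficients, which the paper leaves implicit.
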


Note that
quadratic independence instead of algebraic independence
can be easily shown without the use of Fubini's Theorem.

A slightly
more involved construction allows us to choose even
continuum many random \(p\)-adic numbers, 
which are almost surely algebraically independent. We hasten to emphasize
a peculiarity of the statement:
it states that almost always none of
\emph{uncountably} many events occur.
Usually probability
theory cannot provide an answer in such cases as it only
asserts that the union of \emph{countably} many probability-\(0\) events
has again probability \(0\). However here we can use that
\(J_p\) is compact, hence we can \emph{approximate} the events
via the topology.  To ensure this, we construct the numbers as
infinite branches of a tree and
we show that it suffices
to confine ourselves to sufficiently long \emph{finite} initial
segments. By doing so we reduce the uncountable case to a countable
one. The construction is similar to the one in
\citet{Corner_rigid}. Let \(\length(s)\) denote the length of a
sequence \(s\). 

\begin{lemma}
  \label{lem:1}
  Let \(p\) be an integer.
  We construct \(2^{\aleph_0}\) random \(p\)-adic numbers as follows.
  We choose randomly and independently non-negative integers
  \(a_{s} \in \{0, 1, \dots, p^{2^{n+1} - 2^{n}} - 1\}\)
  with uniform distribution
  for every finite 0-1 sequence \(s\)
  where \(n = \length(s)\).
  In particular,
  \(a_{\langle \rangle} \in \{0, 1, \dots, p-1\}\)
  for the empty sequence \(\langle \rangle\).
  For every 0-1 infinite sequence \(f\),
  we define the \(p\)-adic number
  \begin{equation}
    \label{eq:24}
    \xi_{f} \coloneqq \sum_{n=0}^{\infty} p^{2^{n} - 1} a_{f \restriction n},
  \end{equation}
  where \(f \restriction n\) is the initial segment of \(f\)
  consisting of \(n\) elements.
  Then the \(\xi_{f}\) are almost surely algebraically independent.
\begin{proof}
To handle the \(\xi_{f}\) more easily we define
\begin{equation}
  \label{eq:29}
  b_{s} \coloneqq \sum_{j=0}^{n} p^{2^{j} - 1} a_{s \restriction j}
\end{equation}
for every finite 0-1 sequence \(s\)
where \(n \coloneqq \length(s)\).
Then we have
\begin{equation}
  \label{eq:28}
  \xi_{f} \equiv b_{f \restriction n} \pmod{p^{2^{n+1} - 1}}.
\end{equation}

First note that every \(b_{s}\) is uniformly distributed
on the set of integers \(\{0, 1, \dots, p^{2^{n+1} - 1}\}\),
i.e., on the mod \(p^{2^{n+1} - 1}\) classes of \(J_{p}\)
with \(n \coloneqq \length(s)\).
This implies, in particular,
\begin{equation}
  \label{eq:18}
  \probabilityGiven{b_{s} \equiv c \pmod{p^{2^{n+1} - 1}}}%
  {b_{s \restriction j} \equiv d \pmod{p^{2^{j+1} - 1}}}
  =
  \begin{cases}
    \frac{1}{p^{2^{n+1} - 2^{j+1}}}, &
    c \equiv d \pmod{p^{2^{j+1} - 1}},
    \\
    0, & \text{otherwise.}
  \end{cases}
\end{equation}

For every positive integers \(k\) and \(n\),
every non-zero polynomial \(g\)
with integer coefficients in \(k\) variables,
and every \emph{pairwise distinct} finite 0-1 sequences
\(s_{1}, \dots, s_{k}\) of length \(n\),
we show that there is almost never an extension
\(f_{i}\) of the \(s_{i}\) with \(g(\xi_{f_{1}}, \dots, \xi_{f_{k}})=0\).
This will prove the lemma,
as these are altogether countably many events,
whose union is therefore the probability-\(0\) event that
the \(\xi_{f}\) are dependent.

We use induction on \(k\).
The statement for \(k=0\) is obvious.
For \(k > 0\),
we prove the claim by showing that
the probability of the event is at most \(\varepsilon\)
for all positive \(\varepsilon > 0\). Let \(\mu\) denote the Haar probability measure of
the compact additive group \(J_{p}^{k-1}\).
We say that a subset \(A \subseteq J_{p}^{k-1}\) is \emph{admissible}
if the event that
\(g\) has a solution \(\xi_{f_{1}}, \dots, \xi_{f_{k}}\)
for some infinite 0-1 sequences \(f_{i}\) extending the \(s_{i}\)
with \((\xi_{f_{1}}, \dots, \xi_{f_{k-1}}) \in A\)
has probability at most \(\varepsilon \mu(A)\).
We will prove the claim by partitioning \(J_{p}^{k-1}\)
into countably many admissible subsets. 

For this, write \(g\) in the form:
\begin{equation}
  \label{eq:25}
  g(x_{1}, \dots, x_{k}) = g_{m}(x_{1}, \dots, x_{k-1}) x_k^{m} + \dots +
  g_{0}(x_{1}, \dots, x_{k-1}),
\end{equation}
where \(g_{m} \neq 0\). We choose one of the partitions to be the solution set of \(g_{m}\),
which is admissible (actually has probability \(0\))
by the induction hypothesis on \(k\).
The other partitions will be basic open sets, i.e.,
mod \(p^{N}\)-classes.
As there are only countably many mod \(p^{N}\)-classes
and every family of such classes contains a pairwise disjoint
subfamily with the same union,
it is enough to prove that every
\((\eta_{1}, \dots, \eta_{k-1}) \in J_{p}^{k-1}\)
with \(g_{m}(\eta_{1}, \dots, \eta_{k-1}) \neq 0\)
is contained in an admissible mod \(p^{N}\)-class for some \(N\).
Actually, we show that the mod \(p^{N}\)-class \(A\) of 
\((\eta_{1}, \dots, \eta_{k-1}) \in J_{p}^{k-1}\)
is admissible for \(N\) large enough,
because even the event that
there are extensions \(f_{i}\) of the \(s_{i}\)
with \(\xi_{f_{i}} \equiv \eta_{i} \pmod{p^{N}}\)
and
\(\xi_{f_{1}}, \dots, \xi_{f_{k}}\) is
a solution of \(g\) mod \(p^{N}\),
i.e., \(g(\eta_{1}, \dots, \eta_{k-1}, \xi_{f_{k}}) \equiv 0 \pmod{p^{N}}\)
has probability at most \(\varepsilon \mu(A)\).

Let \(f \succ s\) denote that
the sequence \(f\) is an extension of \(s\).
We consider the probability modulo the values of the \(b_{s_{i}}\),
as this makes the conditions on the \(\xi_{f_{i}}\) independent:
\begin{multline}
  \label{eq:27}
  \probabilityGiven{%
    \exists f_{i} \succ s_{i}: \xi_{f_{i}} \equiv \eta_{i} \pmod{p^{N}},
    g(\eta_{1}, \dots, \eta_{k-1}, \xi_{f_{k}}) \equiv 0 \pmod{p^{N}}}%
  {b_{s_{1}}, \dots, b_{s_{k}}}
  \\ =
  \prod_{i \in [k-1]}
  \probabilityGiven{%
    \exists f_{i} \succ s_{i}:
    \xi_{f_{i}} \equiv \eta_{i} \pmod{p^{N}}}%
  {b_{s_{1}}, \dots, b_{s_{k}}}
  \\ \cdot
  \probabilityGiven{%
    \exists f_{k} \succ s_{k}:
    g(\eta_{1}, \dots, \eta_{k-1}, \xi_{f_{k}}) \equiv 0
    \pmod{p^{N}}}%
  {b_{s_{1}}, \dots, b_{s_{k}}}.
\end{multline}
Let us fix a positive integer
\(r \coloneqq \integerPart{\log_{2} (N  + 1) - 1} = O(\log N)\)
so that \(\xi_{f} \equiv b_{f \restriction r} \pmod{p^{N}}\)
for every infinite 0-1 sequence \(f\).
For every \(i \in [k]\),
there are \(2^{r-n}\) extensions of \(s_{i}\) into a
0-1 sequence of length \(r\) where \(n\) is the length of the \(s_i\).
Therefore the probability that there exists an extension which is
equivalent to \(\eta_{i}\) mod \(p^{N}\)
is at most
\begin{equation}
  \label{eq:20}
  \probabilityGiven{%
    \exists f_{i} \succ s_{i}:
    \xi_{f_{i}} \equiv \eta_{i} \pmod{p^{N}}}%
  {b_{s_{1}}, \dots, b_{s_{k}}}
  \leq
  \frac{2^{r-n}}{p^{N - 2^{n+1}}}.
\end{equation}
For \(i=k\), by a similar argument,
\begin{equation}
  \label{eq:21}
  \probabilityGiven{%
    \exists f_{k} \succ s_{k}:
    g(\eta_{1}, \dots, \eta_{k-1}, \xi_{f_{k}}) \equiv 0
    \pmod{p^{N}}}%
  {b_{s_{1}}, \dots, b_{s_{k}}}
  \leq
  \frac{2^{r-n} R(N)}{p^{N - 2^{n+1}}}, 
\end{equation}
where \(R(N)\) is the number of roots of
\(g(\eta_{1}, \dots, \eta_{k-1}, x)\) in \(x\) mod \(p^{N}\).

To estimate \(R(N)\),
let us consider the factorization over \(J_{p}\)
\begin{equation}
  \label{eq:26}
  g(\eta_{1}, \dots, \eta_{k-1}, x) =
  h(x) \prod_{i \in [l]} (x - \lambda_{i})
\end{equation}
for some \(p\)-adic integers \(\lambda_{i}\).
The polynomial \(h\) has no roots among the \(p\)-adic integers.
So there is a highest \(p\)-power \(p^{M}\)
which can divide \(h(x)\) for any \(p\)-adic number \(x\).

Let us estimate the number of roots of \eqref{eq:26}
modulo \(p^{N}\).
For every root \(x\), the product is divisible by \(p^{N}\).
As \(h(x)\) is divisible by at most \(p^{M}\),
there must be an \(i\) for which \(x - \lambda_{i}\)
is divisible by \(p^{\integerPart{(N - M)/l}}\).
So every root is contained in
the mod \(p^{\integerPart{(N - M)/l}}\)-class of
some \(\lambda_{i}\), and hence
\begin{equation}
  \label{eq:22}
  R(N) \leq 
l
p^{N - \integerPart{(N - M)/l}}.
\end{equation}

By combining \eqref{eq:27}, \eqref{eq:20}, \eqref{eq:21} and
\eqref{eq:22},
we finally obtain
\begin{multline}
  \label{eq:23}
  \probabilityGiven{%
    \exists f_{i} \succ s_{i}: \xi_{f_{i}} \equiv \eta_{i} \pmod{p^{N}},
    g(\eta_{1}, \dots, \eta_{k-1}, \xi_{f_{k}}) \equiv 0 \pmod{p^{N}}}%
  {b_{s_{1}}, \dots, b_{s_{k}}}
  \\ \leq
  {\left(
    \frac{2^{r-n}}{p^{N - 2^{n+1}}}
  \right)}^{k-1}
  \cdot
  \frac{2^{r-n} l p^{N - \integerPart{(N - M)/l}}}{p^{N - 2^{n+1}}}
  \\ =
  \frac{
    l
    {\left(
        2^{r-n} p^{2^{n+1}}
      \right)}^{k}}%
   {p^{\integerPart{(N - M)/l}}}
  \cdot
  \underbrace{\frac{1}{p^{N (k-1)}}}_{\mu(A)}
  =
  O \left( \frac{N^{k}}{p^{N/l}} \right)
  \cdot
  \mu(A).
\end{multline}
Hence \(A\) is indeed admissible for large \(N\).
\end{proof}
\end{lemma}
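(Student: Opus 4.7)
The plan is to reduce the uncountable problem to a countable one by exploiting that distinct branches of the binary tree must already diverge at some finite level. If the family $\{\xi_{f}\}$ fails to be algebraically independent, then some non-zero integer polynomial $g$ in $k$ variables vanishes on a tuple $(\xi_{f_{1}},\dots,\xi_{f_{k}})$ of pairwise distinct infinite 0-1 sequences $f_{i}$, and since these branches are distinct there is a level $n$ at which the initial segments $s_{i} = f_{i}\restriction n$ are already pairwise distinct. Indexing by the data $(k,n,g,s_{1},\dots,s_{k})$ yields countably many ``templates'', so by countable subadditivity it suffices to show that for each fixed template the event
\[
  E = \{ \exists f_{i}\succ s_{i} : g(\xi_{f_{1}},\dots,\xi_{f_{k}}) = 0 \}
\]
has probability zero. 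I would prove the stronger statement $\probability{E}\leq \varepsilon$ for every $\varepsilon>0$, by induction on $k$.

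The base case $k=0$ is vacuous. For the inductive step, write $g$ as a polynomial in $x_{k}$, say $g = g_{m}x_{k}^{m}+\dots+g_{0}$ with $g_{m}(x_{1},\dots,x_{k-1})\neq 0$, and partition $J_{p}^{k-1}$ into the zero set of $g_{m}$, which by the inductive hypothesis contributes a probability-$0$ event, together with countably many pairwise disjoint mod $p^{N}$-cylinders on which $g_{m}$ does not vanish. Call a measurable set $A\subseteq J_{p}^{k-1}$ \emph{admissible} if the restriction of $E$ to $\{(\xi_{f_{1}},\dots,\xi_{f_{k-1}})\in A\}$ has probability at most $\varepsilon\mu(A)$, where $\mu$ is the Haar measure on $J_{p}^{k-1}$. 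It then suffices to show that every mod $p^{N}$-cylinder $A$ around a point $(\eta_{1},\dots,\eta_{k-1})$ with $g_{m}(\eta_{1},\dots,\eta_{k-1})\neq 0$ is admissible once $N$ is chosen large enough.

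To bound the probability on such a cylinder, I would condition on the prefix values $b_{s_{1}},\dots,b_{s_{k}}$. Since the random digits $a_{s}$ on disjoint subtrees are independent, the conditional events involving the various $\xi_{f_{i}}$ become mutually independent, and the probability factors. Truncating at depth $r = \lceil \log_{2}(N+1)\rceil - 1$, so that $b_{f\restriction r}$ already determines $\xi_{f}\bmod p^{N}$, each $s_{i}$ has $2^{r-n}$ extensions to length $r$, and each such extension hits a given residue class mod $p^{N}$ with probability at most $p^{-(N-2^{n+1})}$. For $i<k$ this gives a factor at most $2^{r-n}/p^{N-2^{n+1}}$; for $i=k$ the same estimate applies, multiplied by the number $R(N)$ of roots of $g(\eta_{1},\dots,\eta_{k-1},x)$ modulo $p^{N}$. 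To control $R(N)$ I would factor the polynomial over $J_{p}$ as $h(x)\prod_{i\in[l]}(x-\lambda_{i})$ with $h$ having no $p$-adic roots, so compactness supplies a uniform bound $M$ on the $p$-adic valuation of $h(x)$; any mod $p^{N}$ root must then lie within distance $p^{-\lceil (N-M)/l\rceil}$ of some $\lambda_{i}$, giving $R(N)\leq l\,p^{N-\lceil (N-M)/l\rceil}$. Combining all the factors with $\mu(A)=p^{-N(k-1)}$ yields an estimate of order $O(N^{k}/p^{N/l})\cdot\mu(A)$, which drops below $\varepsilon\mu(A)$ once $N$ is large.

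The main obstacle is the calibration in this final estimate: the number $2^{r-n}$ of length-$r$ extensions grows polynomially in $N$, the residue-hit probabilities decay like $p^{-N}$, and the $p$-adic root count grows like $p^{N(1-1/l)}$. The argument only closes because the exponent $1-1/l$ is strictly less than $1$, which in turn relies on $J_{p}$ being a discrete valuation ring, so that the non-vanishing factor $h$ admits the uniform lower bound on $p$-adic absolute value. Securing this uniform $M$ and balancing it against the truncation depth $r$ are the key technical ingredients; the remaining manipulations are book-keeping of constants independent of $N$.
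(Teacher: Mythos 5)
Your proposal is correct and follows essentially the same route as the paper's proof: the same reduction to countably many templates indexed by $(k,n,g,s_1,\dots,s_k)$, the same induction on $k$ with the notion of admissible sets, the same conditioning on the prefix values $b_{s_i}$, the same truncation depth $r=O(\log N)$, and the same $p$-adic root count $R(N)\leq l\,p^{N-\lceil(N-M)/l\rceil}$ obtained by factoring out the $J_p$-roots and bounding the valuation of the rootless factor $h$ via compactness. The final calibration $O(N^k/p^{N/l})\cdot\mu(A)\leq\varepsilon\mu(A)$ for $N$ large matches the paper's estimate exactly.
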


For a countable module \(B\),
we will randomly and independently choose elements
\(a_n = \sum_{b \in I_{n}} b \xi_{n,b} \in J_{p} B\)
for all \(n < \omega\).
To this end, we select the support \(I_{n}\) and
the coefficients \(\xi_{n,b}\)
according to the following distribution.

\begin{distribution}
\label{dist:corner}
For a countable set \(B\) and for \(n < \omega\),
let \(I_{n}\) be independent, identical distributed
random variables taking values in the 
non-empty finite subsets of \(B\).
Every non-empty finite subset should be contained in \(I_{n}\)
(for a fixed \(n\))
with positive probability.

Furthermore,
for all \(n\) and \(b \in I_{n}\) and \(\alpha < 2^{\aleph_{0}}\)
let the \(\xi_{n,b}^{\alpha}\)
be random \(p\)-adic numbers
chosen as in Lemma~\ref{lem:1}.
Note that the \(\xi_{n,b}^{\alpha}\) are almost surely
algebraically independent 
for all \(n < \omega, b \in I_n, \alpha < 2^{\aleph_{0}}\).
\end{distribution}

We can prove the following probabilistic
variant of Theorem~\ref{thm:rigid}. 

\begin{theorem}
  \label{thm:rigidProb}
  Let \(R\) be a countable \(p\)-reduced, torsion-free ring.
  Let \(B\) be an at most countably generated,
  non-zero, free \(R\)-module.
  Furthermore, let \(I_{n}\) be random finite subsets of \(B\)
  and \(\xi_{n,b}^{\alpha}\) for \(b \in I_{n}\) and
  \(\alpha < 2^{\aleph_{0}}\)
  be random \(p\)-adic numbers
  with Distribution~\ref{dist:corner}, and define
  \begin{equation}
    \label{eq:6}
    a_{n}^{\alpha} \coloneqq
    \sum_{b \in I_{n}} b \xi_{n,b}^{\alpha} \in J_p B.
  \end{equation}
  Then the groups
  \[  G^{A} \coloneqq
  \genPBy{p}{
    B, R a_{n}^{\alpha} : n < \omega, \alpha \in A
    } \subseteq \widehat{B}.
  \]
  for \(\emptyset \neq A \subseteq 2^{\aleph_{0}}\)
  have endomorphism ring \(\End G^{A} = R\)
  and form a fully rigid system, i.e.,
  \begin{equation}
    \label{eq:31}
    \Hom(G^{A}, G^{D}) =
    \begin{cases}
      R, & A \subseteq D \\
      0, & A \nsubseteq D
    \end{cases}
  \end{equation}
  almost surely.
\begin{proof}
By \autoref{lem:1} the family
\(\{\xi_{n,b}^{\alpha} \mid n < \omega, b \in I_n,
\alpha < 2^{\aleph_{0}}\}\)
is almost surely algebraically independent. 
Moreover, every finite \(F \subseteq B\)
is almost surely contained in some (actually infinitely many)
\(I_{n}\) with \(n < \omega\).
We will show that these two properties guarantee that
\(\Hom(G^{A}, G^{D})\) is \(R\) or \(0\)
almost surely, as claimed, i.e.,
all homomorphisms are multiplications by ring elements.

Let \(\varphi\) be a homomorphism from \(G^{A}\) to \(G^{D}\) and let
\(\alpha \in A \subseteq 2^{\aleph_{0}}\) be arbitrary but fixed for the moment.
Obviously, \(b \varphi, a_{n}^{\alpha} \varphi \in G^{D}\) for \(b \in B\)
so there are \(d_{b}, c_{n}, \in \mathbb{Z}[1/p] B\) and
\(t_{m,b}, r_{m,n} \in R[1/p]\)
together with \(\beta_{m,b}, \delta_{m,n} \in D\)
such that
\begin{align}
  \label{eq:7}
  b \varphi = d_{b} + \sum_{m} t_{m,b} a_{m}^{\beta_{m,b}}
  &= d_{b} + \sum_{m,f \colon f \in I_{m}} t_{m,b} f \xi_{m,f}^{\beta_{m,b}}, \\
  \label{eq:9}
  a_{n}^{\alpha} \varphi &= c_{n} +
  \sum_{m,f \colon f \in I_{m}} r_{m,n} f \xi_{m,f}^{\delta_{m,n}}.
\end{align}
On the other hand, by continuity, we also obtain from \eqref{eq:6} and \eqref{eq:7}
\begin{equation}
  \label{eq:8}
  a_{n}^{\alpha} \varphi =
  \sum_{b \in I_{n}}
  d_{b} \xi_{n,b}^{\alpha} +
  \sum_{\substack{m,f \colon f \in I_{m}, \\ b \in I_{n}}}
  t_{m,b} f \xi_{m,f}^{\beta_{m,b}} \xi_{n,b}^{\alpha}.
\end{equation}
By combining the two expressions for \(a_{n}^{\alpha} \varphi\)
we therefore obtain
\begin{equation}
  \label{eq:10}
  \sum_{b \in I_{n}}
  d_{b} \xi_{n,b}^{\alpha} +
  \sum_{\substack{m,f \colon f \in I_{m}, \\ b \in I_{n}}}
  t_{m,b} f \xi_{m,f}^{\beta_{m,b}} \xi_{n,b}^{\alpha}
  =
  c_{n} +
  \sum_{m,f \colon f \in I_{m}} r_{m,n} f \xi_{m,f}^{\delta_{m,n}}.
\end{equation}
Using the algebraic independence of the \(\xi_{n,b}^{\gamma}\),
we compare coefficients and obtain among others
\begin{align}
  \label{eq:11}
  t_{m,b} f &= 0, & (f &\in I_{m}) \\
  \label{eq:14}
  d_{b} &= r_{n,n} b & (b &\in I_{n}) \\
  \label{eq:17}
  d_{b} &= 0 & (\alpha &\notin D).
\end{align}
We have used that for every \(b \in B\)
there is an \(n\) with \(b \in I_{n}\).
For example, to obtain the first equation, we choose \(n \neq m\)
with \(b \in I_{n}\) and compare the coefficients of
\(\xi_{m,f}^{\beta_{m,b}} \xi_{n,b}^{\alpha}\). We conclude that if 
\(\alpha \notin D\) then \(b \varphi = 0\) for all \(b \in B\)
and hence \(\varphi = 0\).
This is enough for the case \(A \nsubseteq D\).
If \(\alpha \in D\) then \(b \varphi = d_{b} = r_{n,n} b\)
for all \(n\) and \(b \in I_{n}\).
We now show that essentially all the \(r_{n,n}\) are equal,
i.e., \(b \varphi = r b\) for some \(r \in R[1/p]\). As \(B\) is free,
there is an element \(b'\) with zero annihilator,
e.g., a basis element.
As a consequence, all the \(r_{n,n}\) are equal
for which \(b' \in I_{n}\).
Let \(r\) be the common value of these \(r_{n,n}\), choose \(b \in B\)
arbitrary and pick \(n\)
with \(b', b \in I_{n}\),
which exists by hypothesis.
So \(r=r_{n,n}\), and using \eqref{eq:14} we obtain
\(b \varphi = r_{n,n} b = r b\) as claimed.
We therefore conclude that
the homomorphism \(\varphi\) is multiplication by
an \(r \in R[1/p]\).

As \(B\) is free, \(R[1/p] \cap \End B = R\), and
it follows that \(r \in R\)
and thus \(\varphi\) is a multiplication with the ring element \(r\).
This finishes the case \(A \subseteq D\) and hence the proof.
\end{proof}
\end{theorem}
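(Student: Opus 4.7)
The plan is to work on the almost-sure event where two properties hold simultaneously: (a) the entire family \(\{\xi_{n,b}^{\alpha}\}\) is algebraically independent, as guaranteed by Lemma~\ref{lem:1}; and (b) every non-empty finite subset of \(B\) is contained in \(I_{n}\) for infinitely many \(n\). Property (b) follows from the second part of Lemma~\ref{lem:borelCant}, since the \(I_{n}\) are independent and, by Distribution~\ref{dist:corner}, any prescribed finite subset of \(B\) has positive probability of being contained in a given \(I_{n}\). On this event the whole argument becomes deterministic.

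Given a homomorphism \(\varphi \colon G^{A} \to G^{D}\), I would expand each \(b \varphi\) and each \(a_{n}^{\alpha} \varphi\) in two complementary ways. First, since \(G^{D}\) is a \(p\)-pure submodule of \(\widehat{B}\) generated over \(R\) by \(B\) together with the \(a_{m}^{\beta}\) for \(\beta \in D\), every image is a finite \(R[1/p]\)-combination of elements of \(B\) and of generators \(a_{m}^{\beta}\). Second, by \(p\)-adic continuity one may apply \(\varphi\) termwise to the convergent series \(a_{n}^{\alpha} = \sum_{b \in I_{n}} b\, \xi_{n,b}^{\alpha}\), producing a second expansion of \(a_{n}^{\alpha} \varphi\) in which basis vectors of \(B\) occur with coefficients that are \emph{products} \(\xi_{m,f}^{\beta_{m,b}} \xi_{n,b}^{\alpha}\) of the random \(p\)-adic numbers.

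Equating the two expansions and projecting onto an \(R\)-basis of \(B\) reduces the identity to scalar equations in \(J_{p}\). Algebraic independence then forces the coefficient of each distinct monomial to vanish separately. The cross-product monomials kill the \(a_{m}^{\beta}\)-contributions to \(b \varphi\), leaving \(b \varphi = d_{b}\) with \(d_{b}\) in the \(p\)-localisation of \(B\). Comparing the remaining linear monomials yields \(d_{b} = r_{n,n} b\) whenever \(b \in I_{n}\) and \(\alpha \in D\), and \(d_{b} = 0\) whenever some \(\alpha \in A \setminus D\) and \(b \in I_{n}\). The case \(A \nsubseteq D\) then falls out at once: pick \(\alpha \in A \setminus D\), use (b) to cover every \(b \in B\) by some \(I_{n}\), and conclude \(\varphi = 0\).

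For \(A \subseteq D\), what remains is to synchronise the various \(r_{n,n}\) and identify their common value with a ring element of \(R\). Fix a basis element \(b_{0} \in B\); by (b), \(b_{0} \in I_{n}\) for infinitely many \(n\), and since \(b_{0}\) has zero annihilator, all these \(r_{n,n}\) coincide with a single value \(r \in R[1/p]\). For any \(b \in B\), pick \(n\) with \(\{b_{0}, b\} \subseteq I_{n}\); the \(b_{0}\)-equation identifies \(r_{n,n}\) with \(r\), and the \(b\)-equation then gives \(b \varphi = r b\). Freeness of \(B\) over \(R\) finally forces \(r \in R[1/p] \cap \End B = R\). The main obstacle is the coefficient-comparison step: for it to succeed, distinct index triples must genuinely produce distinct \(\xi\)-monomials, and this is exactly the content of the sweeping algebraic independence supplied by Lemma~\ref{lem:1}; without it the whole strategy collapses.
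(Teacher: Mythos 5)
Your proposal follows the paper's proof essentially step for step: the same almost-sure event (algebraic independence from Lemma~\ref{lem:1} plus the Borel--Cantelli covering by the \(I_n\)), the same double expansion of \(a_n^\alpha\varphi\) via the generating set and via \(p\)-adic continuity, the same coefficient comparison forcing \(t_{m,b}f=0\), \(d_b=r_{n,n}b\), and \(d_b=0\) when \(\alpha\notin D\), the same synchronisation of the \(r_{n,n}\) through a basis element with zero annihilator, and the same final step \(R[1/p]\cap\End B=R\). This is correct and matches the paper's argument.
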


We also obtain a probabilistic version of Corner's construction
of finite-rank groups as a corollary
(see \citet[Theorem B]{corner} or \citet[Corollary
12.1.3]{goebel2006approximations}).

\begin{corollary}
\label{cor:cornerB}
Let \(A\) be a \(p\)-reduced, \(p\)-torsion-free ring of finite rank
\(n\). Then 
\[G \coloneqq \genPBy{p}{A,wA}\]
is of rank \(2n\) and \(\End(G) \cong A\) almost surely. 
\end{corollary}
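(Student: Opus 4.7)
The plan is to specialize Theorem~\ref{thm:rigidProb} to the rank-$1$ free $R$-module, with $R = A$ and a single random element $w \in \widehat{A}$ replacing the sequence $a_{n}^{\alpha}$. Concretely, I fix a $\mathbb{Z}$-basis $b_{1}, \dots, b_{n}$ of $A$, draw independent $p$-adic integers $\xi_{1}, \dots, \xi_{n} \in J_{p}$ according to any continuous distribution (so that Lemma~\ref{lem:3} applies), and set $w \coloneqq \sum_{i=1}^{n} \xi_{i} b_{i}$. By Lemma~\ref{lem:3} the $\xi_{i}$ are almost surely algebraically independent over $\mathbb{Q}$; in particular $1, \xi_{1}, \dots, \xi_{n}$ are $\mathbb{Q}$-linearly independent, which I assume from now on. I also write $b_{j} b_{i} = \sum_{l} c_{jil} b_{l}$ with $c_{jil} \in \mathbb{Z}$ for the structure constants of $A$.

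For the rank claim I would verify separately that $w b_{1}, \dots, w b_{n}$ are $\mathbb{Z}$-linearly independent (so that $w A$ has rank $n$) and that $A \cap w A = 0$ inside $\widehat{A} \otimes \mathbb{Q}$. After expanding $w$ and the products via the $c_{jil}$, both reduce to the $\mathbb{Q}$-linear independence of $1, \xi_{1}, \dots, \xi_{n}$, with the identity $1_{A} = \sum_{k} n_{k} b_{k}$ used at the end to dispose of the formal annihilators that the structure constants could produce. This shows that $G$ has rank $2n$.

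For the endomorphism ring I let $\varphi \in \End G$ and extend it continuously to a $J_{p}$-linear map $\hat\varphi \colon \widehat{A} \to \widehat{A}$. Writing $\varphi(b_{i}) = (r_{i} + w s_{i})/p^{k_{i}}$ and $\varphi(w b_{i}) = (r'_{i} + w s'_{i})/p^{k'_{i}}$ with $r_{i}, s_{i}, r'_{i}, s'_{i} \in A$, $J_{p}$-linearity also yields $\hat\varphi(w b_{i}) = \sum_{j} \xi_{j} \varphi(b_{j} b_{i})$. Equating these two expressions, expanding $w = \sum_{m} \xi_{m} b_{m}$ everywhere, and comparing monomial coefficients in $\mathbb{Q}[\xi_{1}, \dots, \xi_{n}]$ via algebraic independence, the $\xi^{0}$-coefficient forces $r'_{i} = 0$, and the $\xi^{2}$-coefficient gives the symmetric bilinear relation
\[
  b_{j} \sigma(b_{m} b_{i}) + b_{m} \sigma(b_{j} b_{i}) = 0,
\]
where $\sigma \colon A \to A$ is the $\mathbb{Z}$-linear extension of $b_{l} \mapsto s_{l}$.

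This $\xi^{2}$ step is the main obstacle: by $\mathbb{Z}$-bilinearity the relation reads $a \sigma(a' b_{i}) + a' \sigma(a b_{i}) = 0$ for all $a, a' \in A$, so plugging in $a = a' = 1_{A}$ and using torsion-freeness of $A$ gives $\sigma(b_{i}) = 0$, hence $s_{i} = 0$ for every $i$. Consequently $\varphi(A) \subseteq A[1/p] \cap \widehat{A} = A$ (using that $A$ is pure in $\widehat{A}$ when $A$ is $p$-reduced and $p$-torsion-free). The $\xi^{1}$-coefficient comparison then collapses to $\varphi(b_{j} b_{i}) = b_{j} \varphi(b_{i})$, showing that $\varphi|_{A}$ is a left $A$-module homomorphism, hence right multiplication by $r \coloneqq \varphi(1_{A}) \in A$. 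By $J_{p}$-linearity and $p$-purity this extends to right multiplication by $r$ on all of $G$; conversely every $r \in A$ induces such an endomorphism, so $\End G \cong A$.
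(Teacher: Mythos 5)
The paper states Corollary~\ref{cor:cornerB} without a proof, presenting it only as a consequence of Theorem~\ref{thm:rigidProb} with a pointer to the deterministic version; in fact the derivation is not a literal specialization, since Theorem~\ref{thm:rigidProb} adjoins countably many random elements \(a_{n}^{\alpha}\) while the corollary adjoins a single \(w\), so a direct argument is really needed. Your proof supplies exactly that: it is the natural probabilistic rendering of Corner's Theorem~B, using \autoref{lem:3} to get almost sure algebraic independence of the coordinates \(\xi_{1},\dots,\xi_{n}\), and then running the same coefficient-comparison scheme (continuous \(J_{p}\)-linear extension \(\hat\varphi\), compute \(\hat\varphi(w b_{i})\) in two ways, compare \(\xi\)-monomials) that underlies the proof of Theorem~\ref{thm:rigidProb}. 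The degree-\(2\) step — passing from the bilinear relation \(b_{j}\sigma(b_{m}b_{i}) + b_{m}\sigma(b_{j}b_{i}) = 0\) to \(\sigma = 0\) by setting \(a = a' = 1_{A}\) and using torsion-freeness — and the use of \(p\)-purity of \(A\) in \(\widehat{A}\) to land back in \(A\) are both correct, as is the reduction of the rank claim to \(\mathbb{Q}\)-linear independence of \(1,\xi_{1},\dots,\xi_{n}\) combined with \(1_{A} = \sum_{k} n_{k} b_{k}\). Two small remarks. First, you tacitly assume \(A\) has a \(\mathbb{Z}\)-basis; the corollary's hypotheses (\(p\)-reduced, \(p\)-torsion-free, finite rank) are ostensibly weaker, but they do guarantee a \(\mathbb{Z}_{(p)}\)-basis, and the argument goes through verbatim with structure constants in \(\mathbb{Z}_{(p)}\) instead of \(\mathbb{Z}\), so this is cosmetic. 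Second, it would be worth making explicit the one-line bookkeeping that recovers \(\tau(b_{i}) = s'_{i}/p^{k'_{i}}\) from the \(\xi^{1}\) relation \(\tau(b_{j}b_{i}) = b_{j}\,s'_{i}/p^{k'_{i}}\) by summing against the coordinates \(n_{j}\) of \(1_{A}\); that is the step that actually converts the relation into \(\varphi(b_{j}b_{i}) = b_{j}\varphi(b_{i})\).
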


In the usual way Theorem~\ref{thm:rigidProb} and
Corollary~\ref{cor:cornerB} can be generalized to \(\mathbb
S\)-reduced, \(\mathbb S\)-torsion free algebras \(A\) of finite rank
 over some \(\mathbb S\)-ring \(R\) whose completion \(\widehat{R}\)
has sufficiently high transcendence degree; we confined ourselves to
the simplified case purely for expository reasons and the
generalization is left to the interested reader.  

Note that the elements \(a_n^{\alpha} \in \widehat{B}\) that we chose at
random in Theorem~\ref{thm:rigidProb} were contained in the submodule \(J_p B\).
It would be natural to expect that a nearly uniform choice of random
elements from the completion \(\widehat{B}\) should already suffice.
However, it fails:
the constructed group is actually almost surely free. This shows in a
nice way that the actual distribution does matter which is somewhat
counterintuitive. It seems that especially the implicit assumption of
finite support in Distribution~\ref{dist:corner} is advantageous.

\begin{theorem}
  \label{th:random-free}
  Let \(B\) be a free abelian group of countably infinite rank and
  let
  \begin{equation}
    \label{eq:5}
    G \coloneqq
    \genPBy{p}{
        B, a_{n}
      } \subseteq \widehat{B}
  \end{equation}
  where the \(a_{n}\) with \(n < \omega\) are independent, random
  elements chosen with a nearly uniform distribution
  from \emph{the completion \(\widehat{B}\)},
  i.e., for some \(\alpha > 1\), 
  all \(n < \omega\), and \(x \in B / p^{n} B\)
  we have
  \(\probability{a_{m} + p^{n} B = x} \leq p^{- n^{\alpha}}\).
  Then \(G\) is almost surely free.
\begin{proof}
First we claim that
the random elements \(a_{m}\) are almost never contained in
the \(J_{p}\)-module generated by
any fixed \(b_{1}, \dots, b_{k} \in \widehat{B}\), i.e.,
\begin{equation}
  \label{eq:2}
  \probability{a_{m} \in \moduleGenBy{J_{p}}{b_{1}, \dots, b_{k}}} = 0.
\end{equation}
The event is the intersection of the descending sequence of events
that \(a_{m}\) is contained in the subgroup generated by the \(b_{i}\)
in the factor group \(\widehat{B} / p^{n} \widehat{B}\).
We estimate the probability of the latter events:
\begin{equation}
  \label{eq:12}
  \probability{a_{m} \in \genBy{b_{1}, \dots, b_{k}}
    + p^{n} B} \leq
  \size{\genBy{b_{1}, \dots, b_{k}} \bmod p^{n}} \cdot
  \frac{1}{p^{n^{\alpha}}} \leq
  \frac{p^{nk}}{p^{n^{\alpha}}}.
\end{equation}
This tends to zero as \(n\) goes to infinity, proving the claim.

Next we show that the family of all the \(e_{n}\) and \(a_{m}\)
is almost surely linearly independent over \(J_{p}\).
If the family is linearly dependent then
\begin{equation}
  \label{eq:13}
  \sum_{i=0}^{k} \eta_{i} e_{i} + \sum_{j=0}^{l} \mu_{j} a_{j} = 0
\end{equation}
for some \(p\)-adic integers \(\eta_{i}\) and \(\mu_{j}\),
where not all of those are zero.
The \(e_{i}\) form a basis of \(B\),
so they remain linearly independent over \(J_{p}\),
hence there must be a non-zero \(\mu_{j}\).
Since \(J_{p}\) is a discrete valuation domain,
there is a \(\mu_{m}\) dividing all the \(\mu_{j}\).
It follows that \(\mu_{m}\) divides \(\sum_{i=0}^{k} \eta_{i} e_{i}\).
Because \(J_{p} B = \bigoplus_{n=0}^{\infty} J_{p} e_{i}\)
is pure in \(\widehat{B}\),
the number \(\mu_{m}\) must divide all of the \(\eta_{i}\).
All in all, we obtain
\begin{equation}
  \label{eq:15}
  a_{m} = - \sum_{i=0}^{k} \mu_{m}^{-1} \eta_{i} e_{i}
  - \sum_{j=0}^{l} \mu_{m}^{-1} \mu_{j} a_{j}
\end{equation}
and therefore
\begin{equation}
  \label{eq:16}
  a_{m} \in
  \moduleGenBy{J_{p}}{%
    e_{0}, \dots, e_{k}, a_{0}, \dots, a_{m-1}, a_{m+1}, \dots, a_{l}
  }.
\end{equation}
Since the \(a_{j}\) are independent random variables,
the event \eqref{eq:16}
has probability zero by \eqref{eq:2}
for fixed \(m\), \(k\), and \(l\).
Varying \(m\), \(k\), and \(l\),
there are only countably many such events,
so almost surely none of them occurs,
and hence the family of all the \(e_{n}\) and \(a_{m}\)
are almost surely linearly independent over \(J_{p}\).

Finally, we show that the linear independence ensures that \(G\) is free.
Recall that \(G\) is countable,
and hence we can apply Pontryagin's criterion
(see \citet[Theorem~19.1]{fuchs1970infinite}
or \citet[Theorem~2.3]{eklof2002almost}):
a countable torsion-free abelian group is free
if and only if every finite-rank subgroup is free.
Therefore it suffices to show that the purifications
\(\genPureBy{e_{0}, \dots, e_{k}, a_{0}, \dots, a_{m}}\)
are actually free groups. Recall that \(\widehat{B}\) as a \(J_{p}\)-module has the property
that every pure finite-rank submodule is a free module.
Therefore
\(\moduleGenPureBy{J_{p}}{e_{0}, \dots, e_{k}, a_{0}, \dots, a_{m}}\)
is free, and in particular for some \(k > 0\), we have
\(p^{k} \moduleGenPureBy{J_{p}}{e_{0}, \dots, e_{k}, a_{0}, \dots, a_{m}}
\subseteq
\moduleGenBy{J_{p}}{e_{0}, \dots, e_{k}, a_{0}, \dots, a_{m}}\).
It follows that every element of
\(p^{k} \genPureBy{e_{0}, \dots, e_{k}, a_{0}, \dots, a_{m}}\)
is a linear combination of the
\(e_{0}, \dots, e_{k}, a_{0}, \dots, a_{m}\)
with coefficients in \(J_{p}\). On the other hand, the coefficients are also in \(\mathbb{Z}[1/p]\),
since
\(p^{k} \genPureBy{e_{0}, \dots, e_{k}, a_{0}, \dots, a_{m}}\)
is a subgroup of \(G\).
All in all, using linear independence, the coefficients are in
\(J_{p} \cap \mathbb{Z}[1/p] = \mathbb{Z}\),
so
\(p^{k} \genPureBy{e_{0}, \dots, e_{k}, a_{0}, \dots, a_{m}}\)
is contained in
\(\genBy{e_{0}, \dots, e_{k}, a_{0}, \dots, a_{m}}\).
Therefore
\(\genPureBy{e_{0}, \dots, e_{k}, a_{0}, \dots, a_{m}}\)
must be free.
\end{proof}
\end{theorem}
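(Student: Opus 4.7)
The plan is to invoke Pontryagin's criterion: $G$ is countable and torsion-free, so it is free if and only if every finite-rank pure subgroup is free. Because $G = \genPBy{p}{B, a_n}$, it suffices to show that each purification $\genPureBy{e_0, \ldots, e_k, a_0, \ldots, a_m}$, with $e_i$ a $\mathbb{Z}$-basis of $B$, is free of the expected rank. I will deduce this from the almost sure $J_p$-linear independence of the full family $\{e_n\} \cup \{a_m\}$ inside $\widehat{B}$, combined with the fact that pure, finite-rank $J_p$-submodules of $\widehat{B}$ are free over the DVR $J_p$.

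The core probabilistic estimate is that for any \emph{fixed} finitely generated $J_p$-submodule $M = \moduleGenBy{J_p}{b_1, \ldots, b_k} \subseteq \widehat{B}$, one has $\probability{a_m \in M} = 0$. Indeed, the image of $M$ in $\widehat{B}/p^n \widehat{B}$ has at most $p^{kn}$ elements, and each coset is hit by $a_m$ with probability at most $p^{-n^{\alpha}}$ by hypothesis, so the probability that $a_m$ lies in $M$ modulo $p^n$ is at most $p^{kn - n^{\alpha}}$, which tends to $0$ as $n \to \infty$ since $\alpha > 1$. To bootstrap this to almost sure $J_p$-linear independence of $\{e_n\} \cup \{a_m\}$, I would argue that any nontrivial $J_p$-linear relation among them, using that $J_p$ is a DVR and that $J_p B = \bigoplus J_p e_i$ is pure in $\widehat{B}$, can be manipulated by taking a coefficient of minimum valuation and dividing through to isolate one $a_m$ as a $J_p$-linear combination of finitely many of the other $e_i$ and $a_j$. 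Conditioning on those other variables reduces this to the excluded event just analyzed, and a countable union over choices of $m$ and the finite index sets has probability $0$.

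To finish, I would promote $J_p$-linear independence to freeness of the pure $\mathbb{Z}$-subgroup $H \coloneqq \genPureBy{e_0, \ldots, e_k, a_0, \ldots, a_m}$. Since $J_p$ is a complete DVR, the pure finite-rank $J_p$-submodule $H' \coloneqq \moduleGenPureBy{J_p}{e_0, \ldots, e_k, a_0, \ldots, a_m}$ of $\widehat{B}$ is a free $J_p$-module, so some $p^N H'$ is contained in $\moduleGenBy{J_p}{e_0, \ldots, e_k, a_0, \ldots, a_m}$. Elements of $p^N H$ are then $J_p$-linear combinations of the generators, and as elements of $G$ they also have coefficients in $\mathbb{Z}[1/p]$; linear independence forces these coefficients into $J_p \cap \mathbb{Z}[1/p] = \mathbb{Z}$. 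Hence $p^N H$ is $\mathbb{Z}$-generated by the $e_i$ and $a_j$, so $H$ itself is a free abelian group. The main obstacle I anticipate is the transition from the single-event zero-probability bound to simultaneous $J_p$-linear independence of the entire infinite family: this is the step where the hypothesis $\alpha > 1$ is decisively used, and where the minimum-valuation trick over the DVR $J_p$ must do the work of reducing any potential relation to the already handled event $a_m \in \moduleGenBy{J_p}{\text{fixed elements}}$.
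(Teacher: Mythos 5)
Your proposal follows the paper's own proof step for step: the same zero-probability estimate $\probability{a_m \in \moduleGenBy{J_p}{b_1,\ldots,b_k}} = 0$ via the mod-$p^n$ bound $p^{kn-n^\alpha}$, the same bootstrap to almost sure $J_p$-linear independence by isolating a minimal-valuation coefficient among the $\mu_j$ and invoking purity of $J_p B$ in $\widehat{B}$, and the same conclusion via Pontryagin's criterion plus the freeness of pure finite-rank $J_p$-submodules of $\widehat{B}$ and the identity $J_p \cap \mathbb{Z}[1/p] = \mathbb{Z}$. There is no meaningful divergence from the paper's argument.
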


\section{Small-rank groups}
\label{sec:small-rank-groups}
In this section we provide a probabilistic counterpart for
Zassenhaus's construction (see \citet{zassenhaus1967orders} or
\citet[Theorem 12.1.6]{goebel2006approximations}). 

\begin{theorem}
  \label{thm:Zassenhaus}
  Let \(A\) be a ring with a finite-rank free additive group.
  Then there is a torsion-free abelian group \(M\) of the same rank
  with endomorphism ring \(A\).
\end{theorem}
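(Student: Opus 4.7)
The plan is to realize $A$ as $\End M$ for a suitable overgroup $M$ of $A$ inside $A_{\mathbb{Q}} \coloneqq A \otimes_{\mathbb{Z}} \mathbb{Q}$. Write $n$ for the rank of the additive group of $A$. Viewing $A$ as a subring of $\End_{\mathbb{Z}}(A) \cong \operatorname{Mat}_{n}(\mathbb{Z})$ via left multiplication $\lambda \colon A \hookrightarrow \operatorname{Mat}_{n}(\mathbb{Z})$, I first observe that every $\mathbb{Z}$-endomorphism of a full-rank subgroup $M$ with $A \subseteq M \subseteq A_{\mathbb{Q}}$ extends uniquely to a $\mathbb{Q}$-linear endomorphism of $A_{\mathbb{Q}}$, i.e., to an element of $\operatorname{Mat}_{n}(\mathbb{Q})$. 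The task therefore reduces to choosing $M$ so that the only such extensions preserving $M$ lie in the image $\lambda(A)$.

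Next I would construct $M$ by adjoining to $A$ finitely many fractions of elements of $A$: set
\[
M = A + \sum_{i=1}^{k} \tfrac{1}{p_{i}} \mathbb{Z} w_{i}
\]
for distinct primes $p_{i}$ and carefully chosen $w_{i} \in A$. A candidate extension $\varphi \in \operatorname{Mat}_{n}(\mathbb{Q})$ preserves $M$ precisely when $\varphi(w_{i}) \in \mathbb{Z} w_{i} + p_{i} A$ for each $i$, so each pair $(w_{i}, p_{i})$ imposes a linear constraint on the reduction $\bar{\varphi} \in \operatorname{Mat}_{n}(\mathbb{F}_{p_{i}})$, namely that $\bar{w}_{i}$ be an eigenvector of $\bar{\varphi}$ with eigenvalue in $\mathbb{F}_{p_{i}}$.

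The heart of the argument is choosing the pairs $(w_{i}, p_{i})$ so that these local eigenvector constraints jointly force $\varphi \in \lambda(A)$, rather than admitting spurious endomorphisms from the centralizer of $\lambda(A)$ in $\operatorname{Mat}_{n}(\mathbb{Q})$. One works inside the semisimple $\mathbb{Q}$-algebra $A_{\mathbb{Q}}$ and its Wedderburn decomposition into simple factors: for each simple factor one selects a prime $p_{i}$ at which the relevant minimal polynomials factor favourably, together with a generic element $w_{i}$ whose image in the chosen factor is not stabilised by any nontrivial element of the commutant. Iterating over the factors and collecting the primes yields the required overgroup $M$.

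The principal obstacle is precisely this final matching step: verifying that a finite combination of mod-$p_{i}$ constraints really does rigidify the global endomorphism ring down to $\lambda(A)$, with no accidental extra solutions. This requires a careful case analysis tied to the arithmetic structure of $A$ as a $\mathbb{Z}$-order in $A_{\mathbb{Q}}$, and it is the technical content that makes the deterministic Zassenhaus proof delicate. In the probabilistic version \autoref{thm:ZassenhausProb} this same difficulty is bypassed by invoking the density of primes with prescribed Frobenius behaviour (\autoref{lem:recSumPrimes}), which makes ``generic'' choices of $w_{i}$ and $p_{i}$ work almost surely.
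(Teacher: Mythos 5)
Your proposal sets up the right framework—enlarging $A$ inside $\mathbb{Q}A$ by adjoining fractions, and observing that every endomorphism of a full-rank subgroup extends uniquely to an element of $\operatorname{Mat}_n(\mathbb{Q})$—but you do not actually carry the proof through, and the route you sketch has several concrete problems that the paper's argument is designed to avoid.

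First, the group you adjoin is wrong in shape. You set $M = A + \sum_i \tfrac{1}{p_i}\mathbb{Z}w_i$, adjoining cyclic subgroups; the paper adjoins scaled right $A$-submodules $p_i^{-r_i}(c_i - a_i)A$, where $a_i$ is a ring element, $c_i$ is an integer, and $c_i - a_i$ is a unit of $\mathbb{Q}A$. Adjoining whole right $A$-orbits is what guarantees that right multiplication by $A$ preserves $M$, i.e., that $A \subseteq \End M$ at all. With your cyclic version you must separately ensure that $\tfrac{1}{p_i}w_i$ stays in $M$ after multiplication by an arbitrary element of $A$; that is a nontrivial extra constraint on the $w_i$ which pulls directly against the rigidity you want. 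Second, your proposed finite list of constraints is analysed through the Wedderburn decomposition of $A_{\mathbb{Q}}$, but $A_{\mathbb{Q}}$ need not be semisimple (e.g.\ $A = \mathbb{Z}[\varepsilon]/(\varepsilon^2)$), so that tool is not available in the generality claimed. Third, the eigenvector condition you state is not a linear constraint on $\bar\varphi$; that is a minor slip but it signals that the ``local constraints rigidify globally'' step is harder than described.

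The essential idea you are missing is the paper's killing mechanism, which replaces all the Wedderburn/eigenvector bookkeeping. Enumerate \emph{pairs} of nonzero elements $(a_i, e_i)$ of $A$ (so there are countably many indices $i$, not finitely many), and choose for each a distinct prime $p_i$ and an integer $c_i$ such that $c_i - a_i$ is invertible in $\mathbb{Q}A$ and $p_i$ divides the order of $(c_i - a_i)^{-1}e_i$ in $\mathbb{Q}A/A$. One then shows, by multiplying on the left by $\tilde a := p_i^{r_i}d_i(c_i-a_i)^{-1}\in A$, that whenever $p_i^{-r_i}(c_i - a_i)m \in M$ the order of $m$ in $\mathbb{Q}A/A$ is \emph{not} divisible by $p_i$. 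If $\phi\in\End M$ sends $1\mapsto 0$ and $a_i\mapsto -e_i$, applying $\phi$ to $p_i^{-r_i}(c_i - a_i)\in M$ yields $p_i^{-r_i}e_i\in M$, contradicting that divisibility constraint; hence $\phi=0$. From there one gets $\mathbb{Q}A\cap\End M = A$ and then $\End M=A$ in two short steps. This single local-at-$p_i$ divisibility argument replaces the ``careful case analysis'' you acknowledge you cannot supply. Note also that the probabilistic version (\autoref{thm:ZassenhausProb}) does not bypass that argument; it reuses it verbatim and only uses the Chebotar\"ev-type density in \autoref{lem:recSumPrimes} to guarantee that, for each pair $(e,a)$, some prime with the required divisibility property is selected almost surely.
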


\begin{proof}[Sketch of proof]
For every pair of non-zero elements \(a_{i}\), \(e_{i}\) of \(A\),
choose an integer \(c_{i}\) and a prime \(p_{i}\) such that
\(c_{i} - a_{i}\) is invertible in \(\mathbb{Q} A\),
and \(p_{i}\) divides the order of \({(c_{i} - a_{i})}^{-1} e_{i}\) in
\(\mathbb{Q} A / A\).
Make the choices such that the primes \(p_{i}\) are pairwise distinct.
We choose positive integers \(r_{i}, d_{i}\) such that
\(p_{i}^{r_{i}} d_{i} {(c_{i} - a_{i})}^{-1} \in A\)
and \(d_{i}\) is relative prime to \(p_{i}\).
Now the abelian group
\begin{equation}
  \label{eq:3}
  M \coloneqq \genBy{ A, p_{i}^{- r_{i}} (c_{i} - a_{i}) A : i < \omega }
  \subseteq \mathbb{Q} A
\end{equation}
has endomorphism ring \(A\) acting on it
by multiplication on the right.

To see this,
first we note that
for every \(m \in \mathbb{Q}A\)
with \(p_{i}^{- r_{i}} (c_{i} - a_{i}) m \in M\),
the order of
\(m\)
in \(\mathbb{Q} A / A\) is not divisible by \(p_{i}\).
Indeed, there are \(a, b_{j} \in A\) such that
\begin{equation*}
  p_{i}^{- r_{i}} (c_{i} - a_{i}) m =
  a + \sum_{j} p_{j}^{- r_{j}} (c_{j} - a_{j}) b_{j}.
\end{equation*}
Multiplying by
\(\tilde{a} \coloneqq p_{i}^{r_{i}} d_{i} {(c_{i} - a_{i})}^{-1} \in A\)
on the left
\begin{equation*}
  d_{i} m =
  d_{i} b_{i} + \tilde{a} a +
  \sum_{j \neq i} p_{j}^{- r_{j}} \tilde{a} (c_{j} - a_{j}) b_{j}.
\end{equation*}
The order of the right-hand side is clearly
not divisible by \(p_{i}\).
As \(d_{i}\) is not divisible by \(p_{i}\),
it also follows that \(p_{i}\) does not divide
the order of
\(m\).

We will now prove that
the only \(\phi \in \End M\)
mapping \(1\) to \(0\)
is the zero map.
Suppose for contradiction that
there is a non-zero \(\phi\) with \(1 \phi = 0\). Thus there exists
\(a \in A\) with \(a \phi \neq 0\).
By multiplying \(a\) with a large positive integer if necessary,
we can assume \(a \phi \in A\).
In fact there is an \(i\) with
\(a_{i} = a\) and \(-e_{i} = a \phi\). 

Since
\begin{equation*}
  p_{i}^{-r_{i}} e_{i} = p_{i}^{- r_{i}} (c_{i} - a_{i}) \phi
  \in M,
\end{equation*}
the order of
\({(c_{i} - a_{i})}^{-1} e_{i}\)
in \(\mathbb{Q} A / A\)
is not divisible by \(p_{i}\)
contradicting one of our assumptions.

Next we establish that \(\mathbb{Q}A \cap \End M = A\),
where every \(a \in \mathbb{Q}A\) is identified with
multiplication by \(a\) on the right.
So let \(m \in \mathbb{Q}A \cap \End M\).
Then \(m = 1 m \in M\),
so the order of \(m\) in \(\mathbb{Q} A / A\)
can have only the \(p_{i}\) as prime divisors.
On the other hand,
since
\(p_{i}^{- r_{i}} (c_{i} - a_{i}) m \in M\),
the order of \(m\) in \(\mathbb{Q} A / A\)
is not divisible by \(p_{i}\).
Hence the order of \(m\) must be \(1\),
i.e., \(m \in A\).

It remains to show that \(\End M = A\).
Let \(\phi \in \End M\).
There is a positive integer \(n\) with \(1\cdot n \phi \in A\).
Now \(n \phi - 1 \cdot n \phi\) is an endomorphism of \(M\)
mapping \(1\) to \(0\),
hence it is zero.
Thus \(\phi = 1\phi \in \mathbb{Q}A \cap \End M = A\).
\end{proof}

For the probabilistic version of this construction,
we shall use a consequence of a theorem of Frobenius (see
\citet{frobenius1896beziehungen}) or Chebotarëv's density theorem (see
\citet{chebotarev1923opredelenie, tschebotareff1926bestimmung}) which
is a generalization of Frobenius's theorem.
\begin{lemma}\label{lem:recSumPrimes}
  \label{lem:2}
  For every non-constant (univariate) polynomial \(f \in \Z[x]\)
  the sum of the reciprocals of primes \(p\)
  for which \(f\) has a root modulo \(p\) diverges, i.e.,
  \begin{equation}
    \label{eq:19}
    \sum_{\substack{p \text{ prime} \\
        \text{\(f\) has root mod \(p\)}}}
    \frac{1}{p} = \infty.
  \end{equation}
\end{lemma}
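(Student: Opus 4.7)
The plan is to reduce the claim to Frobenius's density theorem (or equivalently Chebotarev's density theorem applied to the splitting field of \(f\)). Let \(K\) denote the splitting field of \(f\) over \(\Q\) and set \(G \coloneqq \operatorname{Gal}(K/\Q)\), acting on the finite set \(X\) of roots of \(f\) inside \(K\). For every prime \(p\) not dividing the discriminant of \(f\) — excluding only finitely many primes, which is harmless for the convergence question — the cycle type of the Frobenius conjugacy class \(\operatorname{Frob}_{p}\) acting on \(X\) matches the factorization pattern of \(f\) modulo \(p\). Consequently, \(f\) has a root modulo such a \(p\) if and only if some element of \(\operatorname{Frob}_{p}\) has a fixed point on \(X\).

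I would then introduce
\[
H \coloneqq \{\, g \in G : g \text{ has a fixed point on } X \,\},
\]
which is automatically a union of conjugacy classes of \(G\). Since the identity of \(G\) fixes every root, \(H\) is non-empty, and in particular \(\lvert H \rvert / \lvert G \rvert > 0\). Frobenius's theorem then asserts that the set \(S\) of primes \(p\) whose Frobenius class lies in \(H\) has Dirichlet density exactly \(\lvert H \rvert / \lvert G \rvert\), hence positive.

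It remains to turn positive Dirichlet density into divergence of the reciprocal sum. This is a standard analytic fact: if \(\sum_{p \in S} 1/p\) were finite, monotone convergence would force \(\lim_{s \to 1^{+}} \sum_{p \in S} p^{-s}\) to be finite, contradicting the growth \(\sum_{p \in S} p^{-s} \sim (\lvert H \rvert / \lvert G \rvert) \log\!\tfrac{1}{s-1}\) as \(s \to 1^{+}\) implied by positive density (via \(\log\zeta(s)\)). The finitely many primes excluded at the outset contribute only a bounded amount and do not affect the conclusion, so \(\sum_{p \in S} 1/p = \infty\), as required.

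The single non-elementary ingredient is the Frobenius/Chebotarev density theorem itself; the remainder is bookkeeping. I expect the main subtlety will simply be spelling out cleanly the classical correspondence between linear factors of \(f\) modulo \(p\) and fixed points of \(\operatorname{Frob}_{p}\) on \(X\), which rests on the standard theory of decomposition groups and the fact that an element \(g \in G\) fixes a root \(x \in X\) iff the cycle of \(g\) through \(x\) has length one, matching a linear factor in the residue field.
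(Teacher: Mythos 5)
The paper states this lemma without proof, attributing it to Frobenius and Chebotar\"ev; your argument is precisely the deduction those citations are meant to invoke, and it is correct in substance. The one detail worth tightening is the treatment of inseparable \(f\): if \(f\) has a repeated irreducible factor, its discriminant is \(0\), so the clause ``excluding only finitely many primes, which is harmless'' fails as written. This is repaired at the outset by replacing \(f\) with a single irreducible factor \(g\) (any root of \(g\) modulo \(p\) is a root of \(f\) modulo \(p\), so the relevant set of primes can only shrink under this replacement); then \(g\) has nonzero discriminant, the Frobenius--cycle-type correspondence applies to \(g\) for all but finitely many \(p\), and the rest of your argument---\(H\) is a nonempty union of conjugacy classes (indeed of Frobenius divisions, so even Frobenius's original theorem suffices), the prime set \(S\) has Dirichlet density \(\lvert H\rvert/\lvert G\rvert>0\), and positive Dirichlet density forces \(\sum_{p\in S}1/p=\infty\) by monotone convergence---goes through exactly as you have written it.
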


We first specify the distribution according to which we choose the
random elements:

\begin{distribution}
\label{dist:zass}
Let \(A\)  be a ring with a finite-rank, free additive group. For
every prime \(p\) we choose uniformly and independently a non-zero
element \(a_p \in A\) and an integer \(c_p \in [p,2p-1]\).
Whenever \(c_{p} - a_{p}\) is invertible in \(\mathbb{Q} A\),
we choose positive integers \(r_{p}\) and \(d_{p}\) arbitrarily with
\(p^{r_{p}} d_{p} {(c_{p} - a_{p})}^{-1}\) in \(A\)
and \(d_{p}\) relative prime to \(p\).
\end{distribution}

We are ready to prove the probabilistic variant of
Theorem~\ref{thm:Zassenhaus}. 

\begin{theorem}
 \label{thm:ZassenhausProb}
  Let \(A\) be a ring with a finite-rank free additive group and let
  \(M\) be the torsion-free abelian group
  \begin{equation}
    \label{eq:4}
  M \coloneqq \genBy{ A, p^{- r_{p}} (c_p - a_p) A : p \text{
      prime and } \exists {(c_{p} - a_{p})}^{-1} }
  \end{equation}
  with \(c_{p}, a_{p}, r_{p}\) chosen via
  Distribution~\ref{dist:zass}.
  Then \(\End M = A\) almost surely,
  where \(A\) acts on \(M\) by
  multiplication on the right. 
\begin{proof}
With the argumentation in the sketch of proof of
Theorem~\ref{thm:Zassenhaus}, it suffices to prove that
for every pair of non-zero elements \(e\) and \(a\) of \(A\)
there is a prime \(p\) such that \(a_{p} = a\),
the difference \(c_{p} - a_{p}\) is invertible in \(\mathbb{Q} A\),
and
\(p\) divides the order of \({(c_{p} - a_{p})}^{-1} e\) in
\(\mathbb{Q} A / A\).

There are only finitely many \(c\)
for which \(c-a\) is non-invertible,
namely, the roots of the characteristic polynomial of
(left) multiplication by \(a\).
Furthermore \({(c - a)}^{-1} e\) is
a rational function of \(c\) of degree at most \(-1\),
i.e., the coordinates are rational functions
in (any) basis of \(\mathbb{Q} A\).
Now \(p\) divides the order of \({(c_{p} - a_{p})}^{-1} e\) in
\(\mathbb{Q} A / A\) if and only if
there is a coordinate where \(p\) occurs
with negative exponent, e.g., \(p\) divides the denominator
but not the numerator.
We  simplify the coordinates to make
the numerator and denominator relative prime polynomials,
so there are only finitely many primes
such that at any place \(c\), at most these among all primes
divide both the numerator and denominator of a coordinate.
Note that for non-zero coordinates,
the denominator is still non-constant,
as the coordinate has negative degree.

All in all,
there is a non-constant polynomial \(f\)
(the denominator of a coordinate)
with integer coefficients
for which all but finitely many primes \(p\)
and any root \(c\) of \(f\) modulo \(p\),
the order of
\({(c - a)}^{-1} e\) in \(\mathbb{Q} A / A\)
is divisible by \(p\).
For every \(p\) where \(f\) has a root modulo \(p\),
we choose \(c_{p}\) a root with probability at least \(1/p\).
Since these events are independent and
the sum of their probabilities is infinite by \autoref{lem:2},
infinitely many of these events occur almost surely.
Again by independence,
\(a_{p}=a\) almost surely for infinitely many of these \(p\),
finishing the proof.
\end{proof}
\end{theorem}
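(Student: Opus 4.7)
The plan is to reduce the theorem to the deterministic part of Theorem~\ref{thm:Zassenhaus}. Inspecting that proof, the only ``global'' property it uses of the chosen data is: for every pair of non-zero elements $e, a \in A$ there exists a prime $p$ with $a_p = a$, with $c_p - a_p$ invertible in $\Q A$, and with $p$ dividing the order of $(c_p - a_p)^{-1} e$ in $\Q A / A$. Since $A$ is countable, there are only countably many such pairs $(e, a)$, so by the countable union bound it suffices to show, for every fixed pair, that almost surely infinitely many primes $p$ satisfy all three conditions simultaneously.

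Fix non-zero $e, a \in A$. I would view $(c - a)^{-1} e$ as a rational function of the scalar parameter $c$: in any fixed $\Z$-basis of $A$, its coordinates are rational functions in $c$ of degree at most $-1$. After reducing a non-zero coordinate to lowest terms, let $f(x) \in \Z[x]$ be its denominator; this $f$ is non-constant precisely because the coordinate has negative degree. Only finitely many primes are bad---the roots of the characteristic polynomial of left multiplication by $a$ (where $c - a$ fails to be invertible), together with the finitely many primes that could cancel numerator against denominator at any specialization. For every other prime $p$ and every integer $c$ with $f(c) \equiv 0 \pmod{p}$, the prime $p$ occurs with positive multiplicity in the denominator of that coordinate, hence $p$ divides the order of $(c - a)^{-1} e$ in $\Q A / A$.

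Now the random input enters. By Lemma~\ref{lem:recSumPrimes}, the sum of $1/p$ over primes $p$ for which $f$ has a root modulo $p$ diverges. For each such $p$, the uniform distribution of $c_p$ on $[p, 2p - 1]$ sweeps a complete residue system modulo $p$, so the probability that $c_p$ is a root of $f$ modulo $p$ is at least $1/p$; these events are independent across distinct primes. The divergent direction of the Borel-Cantelli lemma (Lemma~\ref{lem:borelCant}) therefore gives that almost surely infinitely many such primes occur. Since $a_p$ is drawn independently of $c_p$ and equals $a$ with positive probability, a further application of Borel-Cantelli refines this to infinitely many primes $p$ satisfying all three conditions, completing the reduction.

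The main obstacle I anticipate is in the algebraic step: one has to check that $(c - a)^{-1} e$ really has at least one non-zero coordinate as a rational function of $c$ (so that its denominator is a genuine non-constant polynomial to which Lemma~\ref{lem:recSumPrimes} applies), and one has to identify a finite set of exceptional primes outside of which the divisibility conclusion is guaranteed. Once this algebra is pinned down, the rest is a clean assembly of Lemma~\ref{lem:recSumPrimes} and the Borel-Cantelli lemma, and the deterministic argument of Theorem~\ref{thm:Zassenhaus} then delivers $\End M = A$ almost surely.
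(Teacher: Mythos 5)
Your proposal matches the paper's proof almost line for line: the same reduction to the deterministic part of Theorem~\ref{thm:Zassenhaus}, the same viewing of \((c-a)^{-1}e\) as a rational function of \(c\) of negative degree with non-constant denominator \(f\), the same invocation of Lemma~\ref{lem:recSumPrimes}, the same estimate that uniformity of \(c_p\) on a complete residue system mod \(p\) gives probability at least \(1/p\) of hitting a root, and the same two-stage Borel--Cantelli step. The one point you flag as a worry---that some coordinate of \((c-a)^{-1}e\) is a non-zero rational function---is handled correctly and is indeed a small gap the paper glosses over; it follows because \((c-a)^{-1}e \neq 0\) whenever \(c-a\) is invertible, so at least one coordinate is non-vanishing on a cofinite set of integers \(c\), hence non-zero as a rational function.
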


\section{Concluding remarks and open questions}
So far the proposed method only works for 
constructions up to continuum in size. This is due to the lack of a strong probability theory
beyond \(2^{\aleph_0}\). However we believe that this method is likely to be generalized to 
such cases  as well; a step in this direction is
Lemma~\ref{lem:1}. A potential route to carry over the
probabilistic tools might be to work with a countable model of set
theory; however this is speculation. In particular, the following
questions remain open, where the last one is probably the most
intricate one: 

\begin{enumerate}
\item Generalize to Butler (locally free): There is a well-known
  generalization of Zassenhaus's Theorem~\ref{thm:Zassenhaus} to the
  locally free case by Butler (see \citet{butler1968locally}).
  It turns out that our randomized
  construction does not easily generalize to this case. \medskip

\emph{Can the
  construction be generalized to the locally free case?} \medskip

\item Use randomness in a more involved way: So far randomness has
  been used either to construct algebraically independent elements or
  to ensure that all elements of a countable set have been chosen.
  However randomness has not been directly employed in the construction itself. \medskip

\emph{Can we use randomization in the constructions itself in order to
  obtain simplified or even stronger constructions?}\medskip

\item Generalize beyond \(2^{\aleph_0}\): Probability theory is
  defined on \(\sigma\)-algebras and countability plays a central role
  in the arguments. \medskip

\emph{Are there probabilistic constructions of objects larger than
  \(2^{\aleph_0}\)?} \medskip

\item
  It is independent of ZFC whether
  for \(\aleph_0 < \lambda < 2^{\aleph_0}\),
  the union of \(\lambda\) events of probability \(0\)
  from the continuous uniform distribution
  (e.g., of a random real number from \([0,1]\))
  has again probability zero.\medskip

\emph{Is there a randomized construction for a (natural) statement in
  abelian group theory, so that the actual probability for the theorem
  to hold is \textbf{independent} of ZFC?} \medskip

For example, is there a realization theorem for some (family of)
ring \(A\) so that \(\End(G) = A\) with probability \(1\) in one
universe and probability \(0\) in another?
\end{enumerate}

\section{Acknowledgments}
We are indebted to Brendan Goldsmith for the valuable feedback and
comments as well as pointing us to
related work. We would also like to thank Winfried Bruns for the
helpful discussions on Lemma~\ref{lem:recSumPrimes}.

\bibliographystyle{plainnat}
\bibliography{bibliography}

\end{document}